\newenvironment{tikzccd}{\begin{center}\begin{tikzcd}}{\end{tikzcd}\end{center}} 
\newcommand{\edge}[1]{\arrow[dash,#1]} 
\theoremstyle{plain}
\newtheorem{thm}{Theorem}[section]
\newtheorem{prop}[thm]{Proposition}
\newtheorem{lem}[thm]{Lemma}
\theoremstyle{remark}
\newtheorem*{rmk}{Remark}
\theoremstyle{definition}
\newtheorem*{claim}{Claim}
\newtheorem{defn}[thm]{Definition}
\let\save@mathaccent\mathaccent
\newcommand*\if@single[3]{
	\setbox0\hbox{${\mathaccent"0362{#1}}^H$}%
	\setbox2\hbox{${\mathaccent"0362{\kern0pt#1}}^H$}%
	\ifdim\ht0=\ht2 #3\else #2\fi }
\newcommand*\rel@kern[1]{\kern#1\dimexpr\macc@kerna}
\newcommand*\widebar[1]{\@ifnextchar^{{\wide@bar{#1}{0}}}{\wide@bar{#1}{1}}}
\newcommand*\wide@bar[2]{\if@single{#1}{\wide@bar@{#1}{#2}{1}}{\wide@bar@{#1}{#2}{2}}}
\newcommand*\wide@bar@[3]{
	\begingroup
	\def\mathaccent##1##2{
		\let\mathaccent\save@mathaccent
		\if#32 \let\macc@nucleus\first@char \fi
		\setbox\z@\hbox{$\macc@style{\macc@nucleus}_{}$}
		\setbox\tw@\hbox{$\macc@style{\macc@nucleus}{}_{}$}
		\dimen@\wd\tw@ \advance\dimen@-\wd\z@ \divide\dimen@ 3 \@tempdima\wd\tw@ \advance\@tempdima-\scriptspace \divide\@tempdima 10 \advance\dimen@-\@tempdima \ifdim\dimen@>\z@ \dimen@0pt \fi \rel@kern{0.6}\kern-\dimen@
		\if#31 \overline{\rel@kern{-0.6}\kern\dimen@\macc@nucleus\rel@kern{0.4}\kern\dimen@} \advance\dimen@0.4\dimexpr\macc@kerna \let\final@kern#2 \ifdim\dimen@<\z@ \let\final@kern1 \fi
		\if \final@kern1 \kern-\dimen@ \fi
		\else \overline{\rel@kern{-0.6}\kern\dimen@#1} \fi }
	\macc@depth\@ne	\let\math@bgroup\@empty \let\math@egroup\macc@set@skewchar 	\mathsurround\z@ \frozen@everymath{\mathgroup\macc@group\relax} 	\macc@set@skewchar\relax \let\mathaccentV\macc@nested@a	\if#31 \macc@nested@a\relax111{#1} \else \def\gobble@till@marker##1\endmarker{} \futurelet\first@char\gobble@till@marker#1\endmarker \ifcat\noexpand\first@char A\else \def\first@char{} \fi \macc@nested@a\relax111{\first@char} \fi
	\endgroup }
\def\blfootnote{\xdef\@thefnmark{}\@footnotetext}
\DeclareMathOperator{\rk}{rk}
\DeclareMathOperator{\Exp}{\ensuremath{\mathbb{E}}}
\newcommand{\calA}{\ensuremath{\mathcal{A}}}
\newcommand{\calB}{\ensuremath{\mathcal{B}}}
\newcommand{\calC}{\ensuremath{\mathcal{C}}}
\newcommand{\calD}{\ensuremath{\mathcal{D}}}
\newcommand{\calI}{\ensuremath{\mathcal{I}}}
\newcommand{\calS}{\ensuremath{\mathcal{S}}}
\newcommand{\N}{\ensuremath{\mathbb{N}}} 
\newcommand{\Z}{\ensuremath{\mathbb{Z}}} 
\DeclarePairedDelimiter{\floor}{\lfloor}{\rfloor}
\DeclarePairedDelimiter{\ceil}{\lceil}{\rceil}
\title{Sidon set systems}
\author{
    Javier Cilleruelo\thanks{We want to dedicate this paper to the memory of our great colleague and friend Javier
    Cilleruelo, who sadly passed away while this project was being completed. Sidon problems were
    one of his mathematical passions, and he enjoyed a lot the new perspective on these problems
    that is treated in this paper.}
    \and 
    Oriol Serra
    \and
    Maximilian W\"otzel
    }
\date{}
\begin{document}
\blfootnote{Mathematics Subject Classification (2010): Primary 11B13; Secondary 05B10.}
\blfootnote{Keywords: Sidon sets, distinct sumsets, additive combinatorics}
\maketitle

\begin{abstract}
A family $\calA$ of $k$-subsets of $\{1,2,\dots, N\}$ is a Sidon system if the sumsets $A+B$, $A,B\in \calA$ are pairwise distinct. 
We show that the largest cardinality $F_k(N)$ of a Sidon system of $k$-subsets of $[N]$ satisfies $F_k(N)\le {N-1\choose k-1}+N-k$ and the asymptotic lower bound $F_k(N)=\Omega_k(N^{k-1})$. 
More precise bounds on $F_k(N)$ are obtained for $k\le 3$.  
We also obtain the threshold probability for a random system to be Sidon for $k \geq 2$. 
\end{abstract}

\section{Introduction and Results}\label{sec:intro}

A set $A$ of integers is a Sidon set if the twofold sums of elements in $A$ are pairwise distinct. 
The study of Sidon sets of maximum cardinality is a classic topic in additive number theory; see e.g. the survey by O'Bryant~\cite{OBryant04}. 
The notion can be extended in a natural way to set systems. 

\begin{defn}
Let $\calA = \{A_i : i \in I,\, A_i \subseteq \Z\}$ be a family of subsets of the integers.
We say that $\calA$ is a \emph{Sidon system} if 
\begin{equation*}
A_i + A_j = A_{i'} + A_{j'} \implies \{i,j\} = \{i',j'\}.
\end{equation*}
\end{defn}   

A central problem in this setting is to estimate the maximum cardinality of a Sidon system. 
We restrict ourselves to uniform set systems of $k$-subsets of the integer interval $[N]=\{1,2,\dots ,N\}$.

\begin{defn}
Given integers $N>k \geq 1$, we denote by $F_k(N)$ the largest cardinality of a Sidon system $\calA \subseteq \binom{[N]}{k}$.
\end{defn}

The case $k=1$ corresponds to classical Sidon sets and it is well-known that $F_1(N)~\sim~N^{1/2}$. 
We address here the problem for $k \geq 2$. First we give an upper bound for $F_k(N)$. 

\begin{thm}\label{thm:upper}
For $2\le k < N$ we have
\begin{equation*}
F_k(N) \leq \binom{N-1}{k-1} + N - k.
\end{equation*}
\end{thm}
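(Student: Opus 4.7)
The plan is to partition $\calA$ by translation equivalence and bound separately the number of classes and the ``excess'' within them.

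For $A\in\binom{[N]}{k}$, define the shape $\phi(A)=A-\min(A)$, a $k$-subset of $\{0,1,\dots,N-1\}$ containing $0$; write $A\sim B$ iff $\phi(A)=\phi(B)$, equivalently iff $B=A+t$ for some $t\in\Z$. Since there are exactly $\binom{N-1}{k-1}$ possible shapes, $\calA$ meets at most $c\leq\binom{N-1}{k-1}$ such classes; let their sizes be $m_1,\dots,m_c$, so that $|\calA|=\sum_i m_i$.

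The heart of the argument is a global restriction on translations within $\calA$: for every integer $t\neq 0$, at most one $A\in\calA$ can satisfy $A+t\in\calA$. To see this, suppose instead that $A,B\in\calA$ are distinct with $A+t,B+t\in\calA$ as well. Then
\begin{equation*}
    A+(B+t) \;=\; (A+t)+B,
\end{equation*}
so the unordered pairs $\{A,B+t\}$ and $\{A+t,B\}$ yield the same sumset; but these two pairs are distinct (they would coincide only if $A=B$ or $t=0$), contradicting the Sidon property.

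To finish, observe that any pair of distinct elements in a common $\sim$-class determines a positive translation $t\in\{1,\ldots,N-k\}$ (fitting both $A$ and $A+t$ inside $[N]$ with $|A|=k$ forces $t\leq N-k$). The restriction above then guarantees that these positive translations are distinct across all of $\calA$, so
\begin{equation*}
    \sum_{i=1}^{c}\binom{m_i}{2} \;\leq\; N-k.
\end{equation*}
Using the elementary inequality $m-1\leq\binom{m}{2}$ valid for every $m\geq 1$, this gives $\sum_i(m_i-1)\leq N-k$ and therefore
\begin{equation*}
    |\calA|\;=\;c+\sum_i(m_i-1)\;\leq\;\binom{N-1}{k-1}+(N-k).
\end{equation*}

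The main difficulty is noticing the sumset identity $A+(B+t)=(A+t)+B$: without it, applying the Sidon condition only inside a single $\sim$-class gives merely $m_i=O(\sqrt{N})$ via Sidon sets of translation amounts, and total $O(N^{k-1/2})$, which is weaker than the claimed bound. The identity is what upgrades the per-class Sidon constraint on translation amounts into a global ``one pair per $t$'' bound, where only $N-k$ values of $t$ are available.
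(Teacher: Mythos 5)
Your proof is correct and follows essentially the same route as the paper: partition $\calA$ by translation class (shape) and use the identity $A+(B+t)=(A+t)+B$ to show that the available translation amounts $t\in[N-k]$ cannot be reused, giving $\binom{N-1}{k-1}$ classes plus at most $N-k$ extra sets. The only difference is bookkeeping — you count within-class pairs via $\sum_i\binom{m_i}{2}\le N-k$, while the paper counts positive difference sets $(\calA(A)-\calA(A))_+$ and shows they are pairwise disjoint subsets of $[N-k]$ — which amounts to the same argument.
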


This upper bound is tight for $k = 2$, that is, $F_2(N) = 2N - 3$.
We believe that it is asymptotically sharp for any $k \geq 2$, but we are only able to prove this for $k = 2$ and $k = 3$.
For $k \geq 4$, we prove that $N^{k-1}$ is the right order of magnitude. 

\begin{thm}\label{thm:lower}
\begin{align*}
F_2(N) &= 2N - 3\\
F_3(N) &\geq N^2/2 - O(N)\\
F_k(N) &= \Omega_k(N^{k-1}), \: k \geq 4.
\end{align*}
\end{thm}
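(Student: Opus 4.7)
My plan is to prove each of the three estimates by a tailored construction.

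\emph{Case $k=2$.} Theorem~\ref{thm:upper} already provides the upper bound $F_2(N)\le 2N-3$, so only a matching construction is needed. I would take the ``two-star'' family
\[
\calA_2=\{\{1,j\}:2\le j\le N\}\cup\{\{j,N\}:2\le j\le N-1\}
\]
of cardinality $2N-3$, and verify it is Sidon by case analysis on which of the two stars each of $A,B,A',B'$ belongs to in an equation $A+B=A'+B'$. Within a pure star the sumset has four ordered elements (e.g.\ $\{1,i\}+\{1,j\}=\{2,1{+}i,1{+}j,i{+}j\}$) that determine the indices. The mixed cases are excluded because $2$ appears in a sumset only when both summands are $1$-stars and $2N$ only when both are $N$-stars, while the mixed sumset $\{1,i\}+\{j,N\}=\{1{+}j,N{+}1,i{+}j,i{+}N\}$ is pinned down by its extremes $1+j,i+N$ together with the constant middle element $N+1$.

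\emph{Case $k=3$.} I would start with the rooted family $\calA_3=\{\{1,a,b\}:2\le a<b\le N\}$ of size $\binom{N-1}{2}$, which is not Sidon outright: the example $\{1,2,3\}+\{1,2,4\}=\{1,2,3\}+\{1,3,4\}=\{2,3,\ldots,7\}$ exhibits a collision enabled by the arithmetic progression $\{1,2,3\}$. The plan is to analyze when $\{1,a,b\}+\{1,c,d\}=\{1,a',b'\}+\{1,c',d'\}$ can hold by ordering the nine potential sums $\{2,1{+}a,1{+}b,1{+}c,1{+}d,a{+}c,a{+}d,b{+}c,b{+}d\}$ and exploiting $b+d=b'+d'$ forced by the maximum; a careful case distinction should show that every non-trivial coincidence requires an AP degeneracy $\{1,a,2a-1\}$ in one of the four sets. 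Since there are only $O(N)$ such AP subsets, deleting them from $\calA_3$ leaves a Sidon subsystem of size $\binom{N-1}{2}-O(N)=N^2/2-O(N)$.

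\emph{Case $k\ge 4$.} I would use the ``top-half rooted'' family
\[
\calA_k=\bigl\{\{1\}\cup X : X\subseteq[\lceil N/2\rceil+1,N],\ |X|=k-1\bigr\},
\]
of cardinality $\binom{\lfloor N/2\rfloor}{k-1}=\Theta_k(N^{k-1})$. Pinning the minimum at $1$ kills translation ambiguities, and the range restriction makes
\[
A+B=\{2\}\sqcup\bigl(1+(X\cup Y)\bigr)\sqcup(X+Y)
\]
a disjoint union, since $\max(1+X\cup Y)\le N+1<N+2\le\min(X+Y)$. Consequently $A+B$ recovers both $V:=X\cup Y$ and $S:=X+Y$, and the Sidon property reduces to recovering the unordered pair $\{X,Y\}$ from $(V,S)$. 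A key observation is that when $V$ is a Sidon set in $\Z$, the $(k-1)^2$ cross-sums of any balanced bipartition of $V$ are all distinct, so $S$ encodes the cross-edge set of the corresponding $K_{k-1,k-1}$ bipartite structure on $V$ and the partition $\{X,Y\}$ is uniquely determined. A counting estimate then bounds the number of $X\in\calA_k$ that take part in some pair $(X,Y)$ with non-Sidon $V$ by $o(|\calA_k|)$, so deleting these leaves $\Omega_k(N^{k-1})$ sets in a Sidon subsystem.

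The main obstacle for $k\ge 4$ is the final counting step: showing that the set of ``spoilable'' $X$'s has cardinality strictly less than $|\calA_k|$. This reduces to an incidence estimate bounding how many $(k-1)$-subsets $X\subseteq[\lceil N/2\rceil+1,N]$ extend to a $(2k-2)$-subset $V$ containing a nontrivial four-term additive relation $a+b=c+d$, which I expect to follow from a second-moment argument but whose bookkeeping is the delicate part; for $k=3$, the explicit classification of AP-based collisions bypasses this abstraction entirely.
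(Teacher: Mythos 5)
Your $k=2$ construction is the same two-star family as the paper's and is fine. The problems are in the other two cases.

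For $k=3$, your key classification claim --- that every nontrivial coincidence forces one of the four sets to be an arithmetic progression $\{1,a,2a-1\}$ --- is false. For example, $\{1,2,4\}+\{1,5,7\}=\{1,2,5\}+\{1,4,6\}=\{2,3,5,6,7,8,9,11\}$ (equivalently, in zero-rooted form, $\{0,1,3\}+\{0,4,6\}=\{0,1,4\}+\{0,3,5\}$), and none of these four sets is an AP; $\{1,3,4\}+\{1,3,7\}=\{1,3,6\}+\{1,4,5\}$ is another such collision. The paper's full case analysis shows that the bad configurations are exactly dilations of a short explicit list, and that one must delete the dilations of $\{0,1,3\}$ \emph{in addition to} the APs (dilations of $\{0,1,2\}$); since this is still only $O(N)$ sets, the bound $F_3(N)\geq N^2/2-O(N)$ survives, but your proposed deletion of APs alone does not yield a Sidon system, so the argument as written is incomplete.

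For $k\geq4$ the gap is fatal rather than repairable by bookkeeping. Your sufficient condition is that $V=X\cup Y$ be a Sidon set for \emph{every} pair of surviving sets, and you plan to delete the $X$'s that occur in some pair with non-Sidon union. But every $X$ occurs in such a pair: pick $x_1<x_2$ in $X$ and any $y_1,y_2$ in $[\ceil{N/2}+1,N]$ with $y_2-y_1=x_2-x_1$ (such a pair exists since the allowed differences and the interval both have length about $N/2$), and extend to a legal $Y$; then $x_1+y_2=x_2+y_1$ destroys the Sidon property of $X\cup Y$. So the set of ``spoilable'' $X$'s is all of $\calA_k$, not $o(|\calA_k|)$. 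Worse, any family in which all pairwise unions are Sidon must have pairwise disjoint difference sets, hence has size $O(N)$, so no deletion scheme can rescue $\Omega_k(N^{k-1})$ along this route; you would need to argue directly that non-Sidon unions rarely produce actual sumset collisions, which is a much harder statement. The paper avoids this entirely: it fixes one small Sidon set $\{a_0,\dots,a_{k-1}\}$ once and for all, places the $i$-th element of every member in an interval $I_i$ of length about $N/(2a_{k-1})$ around $Na_i/(a_{k-1}+1)$, and uses the disjointness of the blocks $I_i+I_j$ to read off the unordered pair $\{u_i+v_j,u_j+v_i\}$ in each block and reconstruct $U,V$ greedily from the smallest index where they differ. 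That ``global scaffold'' needs no condition on pairwise unions and immediately gives $\Omega_k(N^{k-1})$ sets.
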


We also consider the problem of determining the size of a \emph{typical} Sidon system.
We consider the following model for random uniform families of subsets of $[N]$. 

\begin{defn}\label{defn:distribution}
Let $N>k$ be positive integers and $0<p\leq 1$. 
A random set system $\calA$ in $\binom{[N]}{k}$ is obtained by choosing independently every set $A\in \binom{[N]}{k}$ with probability $p$.  
We write $\calA \in \calS(N,k,p)$ to denote that $\calA$ is a random system in this model.
\end{defn}

For $k=1$ the above definition corresponds to the binomial model of random subsets. 
Godbole, Janson, Locantore and Rapoport~\cite{GJLR99} showed, among more general results, that $N^{-3/4}$ is the threshold probability for a random set in $[N]$ to be a Sidon set.
In the case of random systems we obtain the threshold probability for all $k\geq 2$. 

\begin{thm}\label{thm:randomsidon}
Let $\calA\in \calS(N,k,p)$ be a random system.
Then, for any $k\geq 2$,
\begin{equation*}
\lim_{N\to\infty}\Pr(\text{$\calA$ is Sidon}) = 
\begin{cases} 
1, &\text{if } p = o(N^{-(2k+1)/4}) \\ 
0, &\text{if } p = \omega(N^{-(2k+1)/4})
\end{cases}.
\end{equation*}
\end{thm}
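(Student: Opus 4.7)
The plan is to apply the first- and second-moment methods to the random variable $X$ that counts Sidon violations in $\calA$: the number of unordered pairs $\{\{A,B\},\{C,D\}\}$ with $A,B,C,D\in\calA$, $A+B=C+D$, and $\{A,B\}\neq\{C,D\}$. Since $\calA$ is Sidon iff $X=0$, both halves of the theorem reduce to estimates on $\Pr(X=0)$.

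The central ingredient is a combinatorial count: the number $T(N,k)$ of ordered $4$-tuples $(A_1,A_2,A_3,A_4)\in\binom{[N]}{k}^4$ with $A_1+A_2=A_3+A_4$ and $\{A_1,A_2\}\neq\{A_3,A_4\}$ satisfies $T(N,k)=\Theta(N^{2k+1})$. This is what fixes the threshold, since $p^4 T(N,k)=\Theta(1)$ exactly when $p=\Theta(N^{-(2k+1)/4})$. The lower bound comes from the \emph{translation family} $(A,B,A+x,B-x)$ with $x\neq 0$: summing $\binom{N-|x|}{k}^2$ over $x\in[1-N,N-1]\setminus\{0\}$ gives $\Theta(N^{2k+1})$ such tuples. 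For the upper bound one partitions according to $|\{A_1,A_2,A_3,A_4\}|$. The two-distinct case is vacuous (it would force $2A=2B$ with $A\neq B$, ruled out by matching minima and maxima). The four-distinct case contributes $O(N^{2k+1})$: the identities $\min A_1+\min A_2=\min A_3+\min A_4$ and similarly for maxima essentially pin the configuration to a one-parameter translation family, up to lower-order exceptions. The three-distinct case, i.e.\ triples $(A,B,C)$ with $A+B=A+C$ and $B\neq C$, uses the structural observation that $B\subseteq C+(A-A)$, which confines $B$ to an envelope of size $O_k(1)$ once $C$ and $A$ are fixed and gives a subdominant contribution at our scale.

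For the $1$-statement, assume $p=o(N^{-(2k+1)/4})$. Linearity of expectation and the counting lemma yield $\Exp[X]=O(p^4N^{2k+1})+o(1)=o(1)$, where the $o(1)$ absorbs the contribution from three-distinct-set violations, and Markov's inequality gives $\calA$ Sidon w.h.p.

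For the $0$-statement, assume $p=\omega(N^{-(2k+1)/4})$, and restrict attention to the translation collisions with four distinct sets, counted by $Y$, so that $\Exp[Y]=\Theta(p^4N^{2k+1})\to\infty$. The variance is then estimated by classifying pairs of translation collisions according to the number $j\in\{1,2,3,4\}$ of sets they share; the key bookkeeping is that each fixed set belongs to $O(N^{k+1})$ translation collisions, and this makes each of the cross-terms of order $p^{8-j}$ subdominant relative to $\Exp[Y]^2$. Chebyshev's inequality then forces $\Pr(Y=0)\to 0$, so $\calA$ fails to be Sidon w.h.p. I expect the principal technical obstacle to be the upper bound in the counting lemma, particularly the three-distinct-set case, where one genuinely needs a sumset structural argument rather than purely dimensional counting.
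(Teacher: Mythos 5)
Your 0-statement plan is fine and is a legitimate alternative to the paper's route: the paper also restricts to "translation collisions'' with four distinct sets (pairs with $A_1'=B_1'$, $A_2'=B_2'$ and pairwise distinct minima), proves the same lower bound $\Omega_k(N^{2k+1})$ for their number, but then applies Janson's inequality together with monotonicity in $p$ rather than Chebyshev. Your second-moment bookkeeping does close, but only if you bound, for each number $j\in\{1,2,3,4\}$ of shared sets, how many translation collisions contain $j$ prescribed sets (of order $N^{k+1}$, $N^{k}$, $O_k(1)$, $O(1)$ respectively); the single $j=1$ bound you quote is too crude for $j\geq 3$, where you need that three shared sets essentially determine the fourth (in the paper this is the statement that $\calD(4)$ and $\calD(5)$ are empty). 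Also, in the lower bound via $(A,B,A+x,B-x)$ you must discard the degenerate case $B=A+x$, where the two pairs coincide; this only affects lower-order terms.

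The genuine gap is in the 1-statement, in the three-distinct-set case $A+B=A+C$ with $B\neq C$. Your structural observation $B\subseteq C+(A-A)$ only says that, once $A$ and $C$ are fixed, there are $O_k(1)$ choices of $B$; this bounds the number of such triples by $O_k(N^{2k})$, and the corresponding contribution to $\Exp[X]$ is of order $N^{2k}p^3$. Since the 1-statement must hold for every $p=o(N^{-(2k+1)/4})$, in particular for $p$ just below the threshold, this term is of order $N^{2k-3(2k+1)/4}=N^{(2k-3)/4}\to\infty$ for all $k\geq 2$: far from being "subdominant at our scale'', under your bound this case is the bottleneck and the first moment does not close. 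One needs a count of order $N^{3(2k+1)/4}$ or better, and the paper achieves $O_k(N^{3k/2})$ by a much finer argument: writing the collision as the linear system $a_i+b=a_{\pi(i)}+c_{\tau(i)}$ for permutations $\pi,\tau$ (more generally with one column per element of $B\setminus C$) and proving that the coefficient matrix has rank at least $\lceil k/2\rceil+|B\setminus C|$, so that only about $3k/2$ of the parameters can be chosen freely. Nothing in your proposal supplies an argument of this strength, and you would need one. A secondary inaccuracy in the same counting lemma: the two-distinct case is not vacuous as claimed — it also contains collisions of the form $A+A=A+B$, and even for $2A=2B$ matching minima and maxima only yields $\min A=\min B$ and $\max A=\max B$, not $A=B$. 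This is harmless because the containment $B'\subseteq A'+A'$ gives an $O_k(N^{k})$ bound, which is ample at this scale, but the justification you give is not correct as stated.
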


We note that the value of the threshold probability in Theorem~\ref{thm:randomsidon} fits with the results in~\cite{GJLR99} for Sidon sets (i.e., $k=1$). 
This is in contrast with the results obtained in Theorems~\ref{thm:upper} and~\ref{thm:lower}.  
In particular, the relative gap between the random and the extremal setting is slightly smaller in the case of Sidon systems, although not by the amount that one may expect, given that a priori, equality of sumsets seems to be a stronger condition than equality of sums of numbers.

We also note that the extension of additive problems in the integers or in additive groups to the monoid of sumsets has been considered in the literature. 
For instance, Cilleruelo, Hamidoune and Serra~\cite{CHS10} proved analogues of the Cauchy--Davenport and Vosper theorems in this setting.
An important question related to the current work is whether a certain set can be expressed as a sumset in multiple ways.
Alon~\cite{Alon07} used probabilistic arguments combined with spectral techniques to improve earlier bounds by Green~\cite{Green05} on the maximal cardinality of subsets of a cyclic group of prime order that cannot be expressed as a sumset.
Fan and Tringali~\cite{FT17} use tools from factorization theory to give (among other results) necessary and sufficient conditions for  certain subsets of integers to be written as sumsets in more than one way.
Selfridge and Straus~\cite{SS58} showed that the representation function $r_A(n)=|\{(a,a')\in A\times A: n=a+a'\}|$ of a subset $A$ in a field of characteristic zero determines the set.
They also considered the general case of $h$-fold sumsets $hA$ and gave necessary conditions when the representation function of this sumset completely determines the set $A$.
These results were later generalized by Gordon, Fraenkel and Straus~\cite{GFS62} to torsion free abelian groups.
For a more detailed look on these problems, see also the recent survey by Fomin~\cite{Fomin17}.
In contrast to these results, in the asymmetric case, the representation function does in general not determine the summands, even in the case of twofold sumsets $A+B$. 

The paper is organized as follows. 
First, in Section~\ref{sec:not}, we introduce some needed definitions and notation that will be used in the remainder of the article.
In Section~\ref{sec:ub}, we prove the upper bound for $F_k(N)$ in Theorem~\ref{thm:upper}. 
Section~\ref{sec:lower} gives constructions of Sidon systems based on analogous constructions for Sidon sets which prove the lower bounds in Theorem~\ref{thm:lower}. 
Section~\ref{sec:random} discusses the threshold probability for a random system to be Sidon. 
We conclude the paper with final remarks in Section~\ref{sec:final}.

\section{Notation}\label{sec:not}

For integers $N>k$, let $\binom{[N]}{k}_0$ denote the family of $k$-subsets of $\{0,1,\dots,N\}$ that all contain $0$.
If $A$ is a $k$-subset of $[N]$, we can always write it in the form $\min(A)+A'$, where $A'\in \binom{[N]}{k}_0$.
If not otherwise stated, we will often use the following notational convention: for a set $A$ we denote by the lower case its minimum element, $a=\min (A)$, and we denote by a dashed letter the translation $A'=A-a$. 
In this context, the set $A'$ will be referred to as the \emph{distance set} of $A$.

Let $A,B,U,V$ be $k$-subsets of $[N]$.
We observe that the equation
\begin{equation}\label{eq:equiva}
A+B=U+V
\end{equation}
is equivalent to 
\begin{equation}\label{eq:equivb}
a+b=u+v \quad\text{and}\quad A'+B'=U'+V',
\end{equation}
so we can often restrict ourselves to elements of $\binom{[N]}{k}_0$ and consider translations separately.

\

Finally, we denote by $\preceq$ the lexicographic order of $k$-subsets of $[N]$, namely, $A\preceq B$ if and only if $\min(A \Delta B) \in A$ or $A=B$, where $A\Delta B$ denotes the symmetric difference of $A$ and $B$. 
Similarly, we define $\prec$ such that $A\prec B$ if and only if $A\preceq B$ and $A\neq B$.
We also use $\preceq$ (resp. $\prec $) to denote the induced lexicographic order on tuples of $k$-subsets.

\section{An Upper Bound for Sidon Systems}\label{sec:ub}

\begin{proof}[Proof of Theorem~\ref{thm:upper}] 
Let $\calA$ be a Sidon system of $k$-subsets in $[N]$. 
For each set $A$ in $\binom{[N-1]}{k-1}$ define \[\calA(A)=\{x\in [N] : x+(A\cup\{0\})\in\calA\}.\]
We have
\begin{equation}\label{eq:a}
|\calA|=\sum_{A\in {[N-1]\choose k-1}} |\calA (A)|.
\end{equation}
Denoting by $Z_+=Z\cap \N$ the set of positive numbers in a set $Z$ of integers, we clearly have
\begin{equation}\label{eq:ax}
|\calA(A)| \le |(\calA(A)-\calA(A))_+|+1.
\end{equation}
We observe that if $A\neq B$ are sets in $\binom{[N-1]}{k-1}$, then 
\begin{equation}\label{eq:axaydisj}
\left(\calA(A)-\calA(A)\right)_+ \cap \left(\calA(B)-\calA(B)\right)_+ = \emptyset.
\end{equation}
Indeed, suppose there are $x'> x$ in $\calA(A)$ and $y>y'$ in $\calA(B)$ such that $x'-x=y-y'$.
Then this implies $(x+A)+(y+B)=(x'+A)+(y'+B)$, which is a violation to the Sidon property of $\calA$.
Equation~\ref{eq:axaydisj} directly implies that
\begin{equation}\label{eq:axay}
\left|\bigcup_{A\in {[N-1]\choose k-1}} (\calA(A)-\calA(A))_+\right|=\sum_{A\in {[N-1]\choose k-1}}|(\calA (A)-\calA(A))_+|.
\end{equation}
Since $\max(A)\geq k-1$ for any set $A$ in $\binom{[N-1]}{k-1}$ and $x+\max(A)\leq N$ for any $x\in\calA(A)$, we clearly have $\calA(A)\subseteq [N-k+1]$, and hence $(\calA(A)-\calA(A))_+ \subseteq [N-k]$.
Together with \eqref{eq:axaydisj}, this implies
\begin{equation}\label{eq:axall}
\left|\bigcup_{A\in {[N]\choose k-1}} (\calA(A)-\calA(A))_+\right|\le N-k.
\end{equation}
Combining Equations~\ref{eq:a}, \ref{eq:ax}, \ref{eq:axay}, and~\ref{eq:axall}, we conclude
\begin{align*}
|\calA| 
&\leq \binom{N-1}{k-1} + \sum_{A\in \binom{[N-1]}{k-1}} |(\calA(A)-\calA(A))_+| \\
&= \binom{N-1}{k-1} + \left|\bigcup_{A\in \binom{[N-1]}{k-1}} (\calA(A)-\calA(A))_+\right| \\
&\leq \binom{N-1}{k-1} + N - k.
\end{align*}
\end{proof}

\section{A Lower Bound for Sidon Systems}\label{sec:lower}

We will consider the cases $k=2$, $k=3$ and $k\geq 4$ separately. 

\begin{prop}\label{prop:k2}
$F_2(N) \geq 2N - 3$.
\end{prop}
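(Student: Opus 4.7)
The plan is to exhibit an explicit Sidon system in $\binom{[N]}{2}$ of cardinality $2N-3$. I would take
\[
\calA = \bigl\{\{1, j\} : 2 \leq j \leq N\bigr\} \cup \bigl\{\{i, N\} : 2 \leq i \leq N-1\bigr\},
\]
which has $(N-1)+(N-2) = 2N-3$ elements, the set $\{1, N\}$ belonging only to the first family.

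To verify the Sidon property, I would classify unordered pairs $\{A,B\}$ from $\calA$ into three types according to which subfamily each element belongs to, and record the corresponding sumsets explicitly:
\begin{align*}
\text{Type I:} &\quad \{1, j_1\} + \{1, j_2\} = \{2,\, 1+j_1,\, 1+j_2,\, j_1+j_2\},\\
\text{Type II:} &\quad \{i_1, N\} + \{i_2, N\} = \{i_1+i_2,\, i_1+N,\, i_2+N,\, 2N\},\\
\text{Type III:} &\quad \{1, j\} + \{i, N\} = \{1+i,\, 1+N,\, j+i,\, j+N\}.
\end{align*}
The minima and maxima of these sumsets are enough to separate the three types: Type I sumsets have minimum $2$; Type II sumsets have minimum at least $4$ and maximum exactly $2N$; and Type III sumsets have minimum at least $3$ and maximum $j+N \leq 2N$, with equality only when $j=N$. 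This instantly rules out coincidences except possibly between Types II and III when $j=N$, which is dispatched by comparing the second-smallest entries ($i_1+N \geq N+2$ versus $1+N$).

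Within each type, one recovers the parameters of the pair directly from extremal entries of the sumset: in Types I and II the two middle entries of a four-element sumset yield the free parameters, while in Type III the minimum $1+i$ determines $i$ and the maximum $j+N$ determines $j$. The main obstacle I foresee is purely bookkeeping around the degenerate cases where the sumset collapses to fewer than four distinct elements (for instance $j_1 = j_2$ in Type I or $i+j=N+1$ in Type III). These are straightforward to handle since pairs producing sumsets of different cardinalities are automatically distinct, and the recovery argument above continues to go through within each cardinality class.
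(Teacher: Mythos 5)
Your construction is exactly the family used in the paper (the sets $\{1,1+i\}$ and $\{N-i,N\}$, re-indexed), and your verification via minima, maxima and second-smallest elements correctly fills in the "easy to check" step the paper leaves to the reader, including the degenerate three-element sumsets. So the proposal is correct and essentially identical to the paper's proof.
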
 

\begin{proof} 
It is easy to check that the family \[\calA = \{\{1,1+i\} : i = 1, \dots, N-1\} \cup \{\{N-i,N\} : i = 1, \dots, N-2\}\] is a Sidon system which satisfies the inequality.
\end{proof} 

\begin{prop}\label{prop:k3}
$F_3(N) \geq N^2/2 - O(N)$.
\end{prop}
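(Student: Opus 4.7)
The target $N^2/2 - O(N)$ coincides with $\binom{N-1}{2}$ up to an $O(N)$ error, so the plan is to build a Sidon family consisting of essentially one triple per pair in $\binom{[N-1]}{2}$, discarding at most $O(N)$ pairs if necessary. For each pair $P = \{i, j\}$ with $1 \leq i < j \leq N-1$, I would select a shift $x_P \in [1, N-j]$ and form the triple $T_P = \{x_P,\, x_P + i,\, x_P + j\}$. The candidate family $\calA = \{T_P : P \in \binom{[N-1]}{2}\}$ then has cardinality $\binom{N-1}{2}$, matching the desired order.

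By the splitting of sumset equality in Section~\ref{sec:not}, the identity $T_{P_1} + T_{P_2} = T_{P_3} + T_{P_4}$ is equivalent to the simultaneous conditions $(P_1 \cup \{0\}) + (P_2 \cup \{0\}) = (P_3 \cup \{0\}) + (P_4 \cup \{0\})$ and $x_{P_1} + x_{P_2} = x_{P_3} + x_{P_4}$. The former is a purely combinatorial condition on pairs and the latter is a linear equation in the shifts, so the Sidon property reduces to ensuring that whenever four pairs satisfy the combinatorial condition with $\{P_1,P_2\} \neq \{P_3,P_4\}$, the shifts fail to satisfy the linear condition.

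The first step is therefore a structural analysis of the combinatorial condition. Since each side is the sumset of two $3$-element sets containing $0$, and hence has at most nine elements, this is a finite case analysis: I would classify the factorizations of a nine-element sumset as a sum of two such triples, thereby describing the ``collision quadruples'' $(P_1,P_2,P_3,P_4)$ and bounding how many such quadruples involve a fixed pair $P$.

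The second step is to choose the shifts $x_P$ to invalidate the linear equations induced by the collision quadruples. I would try either a greedy construction (processing pairs one at a time, picking any shift compatible with the forbidden equations already accumulated) or a probabilistic one (choosing each $x_P$ uniformly in $[1, N-j]$ and then deleting one triple per surviving Sidon violation). In either case, only $O(N)$ pairs are removed, yielding a Sidon family of size $\binom{N-1}{2} - O(N) = N^2/2 - O(N)$.

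The main obstacle is the structural analysis in the first step: the bound on the number of collision quadruples per pair must be sharp enough that the shift-choice step succeeds with only $O(N)$ losses. I expect the classification to come down to a finite list of ``coincidence patterns'' (for instance arithmetic-progression or reflection symmetries within $P_1 \cup P_2 \cup P_3 \cup P_4$), making the case analysis long but tractable.
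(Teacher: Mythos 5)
Your reduction of the Sidon condition to the two conditions on distance sets and on shifts is exactly the splitting the paper uses, and the overall plan (essentially one triple per element of $\binom{[N-1]}{2}$, then repair the collisions) is sound in outline. The genuine gap is that you defer precisely the step that constitutes the proof: a classification, or at least a sharp count, of quadruples of distance sets $A',B',C',D'\in\binom{[N]}{3}_0$ with $A'+B'=C'+D'$ and $\{A',B'\}\neq\{C',D'\}$. Nothing in your sketch establishes a nontrivial bound on the number of such collision quadruples, and the trivial bound is far too weak: each of $A',B'$ has two free parameters, while $C',D'$ must be chosen inside the at most nine-element sumset $A'+B'$, so a priori there are $O(N^4)$ collision quadruples. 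With only that, your probabilistic shift assignment leaves an expected $O(N^4)\cdot O(1/N)=O(N^3)$ surviving violations, and the deletion step destroys the family; the greedy variant fails for the same reason. For the repair step to cost only $O(N)$ triples you need to know the collision quadruples number $O(N)$ (or at the very least $O(N^2)$), and proving that is exactly the content of the paper's case analysis (Appendix~\ref{sec:3solutions}), which shows that, up to dilation, the collisions form a finite list, all involving dilations of $\{0,1,2\}$ or $\{0,1,3\}$. Your sketch also slightly mislocates the difficulty: when $|A'+B'|=9$ the factorization is unique by a short direct argument (the minimum, maximum and element sum determine the parts), so the real work is in the degenerate sizes $5\le|A'+B'|\le 8$.

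Two further remarks. First, once the classification is known, the shift-choosing machinery is unnecessary: the paper fixes every minimum equal to $1$ (so the translation equation $a+b=u+v$ is trivially satisfied and gives no help) and simply deletes the $O(N)$ dilations of $\{0,1,2\}$ and $\{0,1,3\}$, which are the only distance sets ever occurring in a collision; this already gives $|\calA|\ge\binom{N-1}{2}-\frac{5}{6}N$. Second, your shift step has its own unexamined wrinkle: the admissible range $[1,N-j]$ has bounded size when $j$ is close to $N$, so the ``probability $O(1/N)$ per collision'' heuristic is not automatic for quadruples all of whose pairs have large maximum; handling this again requires structural information about which quadruples can collide. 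So as it stands the proposal is a correct reduction plus a plan, but the key lemma it relies on is neither proved nor reduced to anything easier than the paper's own argument.
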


\begin{proof} 
Let $A,B$ be elements of $\binom{[N-1]}{3}_0$, write $A=\{0<a_1<a_2\}$ and $B=\{0<b_1<b_2\}$. 
Without loss of generality, we can assume $a_1\le b_1$. 

If all sums in $A+B$ are distinct, then the equations 
\begin{align*}
a_1 &= \min \left((A+B)\setminus\{0\}\right) \\
a_2+b_2 &= \max(A+B) \\
3(a_1+b_1+a_2+b_2) &= \sum_{z\in A+B} z
\end{align*}
determine $a_1$, $b_1$, and $a_2+b_2$ from $A+B$. 
Moreover, $\{a_1+b_2,a_2+b_1\}$ are the second and third largest elements, say $s'<s$, in $A+B$. 
If $a_1+b_2=s$ then $b_2=s-a_1$ and $a_2=\max(A+B)-b_2$ are the two points distinct from $\{0,a_1,b_1,a_1+b_1,s,s',a_2+b_2\}$ in  $A+B$, otherwise these points are $a_2=s-b_1$ and $b_2=\max(A+B)-a_2$, and only one of these two possibilities can occur.
Therefore, if $|A+B|=9$ then the sumset $A+B$ determines the sets $A,B$. 

If $|A+B|=5$ then both $A$ and $B$ are arithmetic progressions with the same common difference, that is, $A=B$ are dilations of $\{0,1,2\}$.

If $6\le |A+B|\le 8$ then a case analysis, which is detailed in Appendix~\ref{sec:3solutions}, shows that the only possibilities for the pair $A,B$ are dilations of the sets in following list:
\begin{alignat*}{2}
\{0,1,2\}+\{0,2,5\}&=\{0,1,2\}+\{0,3,5\}&&=\{0,1,2,3,4,5,6,7\}\\
\{0,1,3\}+\{0,1,5\}&=\{0,1,4\}+\{0,2,4\}&&=\{0,1,2,3,4,5,6,8\}\\
\{0,1,3\}+\{0,4,6\}&=\{0,1,4\}+\{0,3,5\}&&=\{0,1,3,4,5,6,7,9\}\\
\{0,2,3\}+\{0,4,5\}&=\{0,2,4\}+\{0,3,4\}&&=\{0,2,3,4,5,6,7,8\}\\
\{0,2,3\}+\{0,2,6\}&=\{0,2,5\}+\{0,3,4\}&&=\{0,2,3,4,5,6,8,9\}\\
\{0,1,2\}+\{0,1,4\}&=\{0,1,2\}+\{0,2,4\}&&=\{0,1,2,3,4,5,6\}\\
\{0,1,2\}+\{0,3,4\}&=\{0,1,3\}+\{0,2,3\}&&=\{0,1,2,3,4,5,6\}\\
\{0,1,3\}+\{0,1,4\}&=\{0,1,3\}+\{0,2,4\}&&=\{0,1,2,3,4,5,7\}\\
\{0,2,3\}+\{0,2,4\}&=\{0,2,3\}+\{0,3,4\}&&=\{0,2,3,4,5,6,7\}\\
\{0,1,2\}+\{0,1,3\}&=\{0,1,2\}+\{0,2,3\}&&=\{0,1,2,3,4,5\}.
\end{alignat*}
All the above solutions involve either dilations of $\{0,1,2\}$ or $\{0,1,3\}$, so by eliminating these, one obtains a Sidon system with $N^2/2-O(N)$ sets. 
More precisely, let \[\calB=\{\lambda\cdot \{0,1,2\},\, \lambda\cdot \{0,1,3\} : 1\le \lambda\le \floor{N/2}\}.\]
Then the set \[\calA = \left\{1+A : A \in \binom{[N-1]}{3}_0 \setminus \calB\right\}\] is a Sidon system with \[|\calA| \geq \binom{N-1}{2} - \frac{5}{6}N.\]
This completes the proof.
\end{proof}

For $k\ge 4$ we give a somewhat more involved construction of a large Sidon system.

\begin{prop}\label{prop:k4}
For $k\geq 4$, it holds that \[F_k(N) =\Omega_k(N^{k-1}).\]
\end{prop}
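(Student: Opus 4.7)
The plan is to adapt the strategy of Proposition~\ref{prop:k3}: start with essentially all of the family $\binom{[N-1]}{k-1}_0$ of $k$-subsets of $\{0,1,\dots,N-1\}$ containing $0$, which already has cardinality $\binom{N-1}{k-1} = \Theta_k(N^{k-1})$, and remove a subfamily of ``bad'' distance sets of size $o(N^{k-1})$ so that, after translating everything by $1$ to land in $[N]$, the remainder forms a Sidon system.

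Using the decomposition from Section~\ref{sec:not}, the equation $A+B = U+V$ is equivalent to $a+b = u+v$ together with $A'+B' = U'+V'$ on the distance sets. Since all retained sets will share the same minimum ($=1$), the condition on minima is automatic, and the Sidon system property reduces to showing that the map $\{A',B'\} \mapsto A'+B'$ is injective on the chosen distance sets. The key technical step is therefore to identify the family $\calB \subseteq \binom{[N-1]}{k-1}_0$ of distance sets appearing in some nontrivial collision $A'+B' = U'+V'$ with $\{A',B'\}\neq\{U',V'\}$, and to prove $|\calB| = o(N^{k-1})$.

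For $k = 3$ the classification of such collisions collapses $\calB$ to dilations of $\{0,1,2\}$ and $\{0,1,3\}$, giving only $O(N)$ bad sets. For $k \geq 4$ the list of extremal configurations no longer admits such a compact description, so I would argue \emph{structurally}: using that the minimum, maximum, and next few extremal elements of $S = A'+B'$ are always recoverable from $S$, one can bootstrap and show that any nontrivial decomposition $S = X+Y$ forces at least one of $X, Y$ to be contained in a short arithmetic progression, to be a dilation of a small fixed configuration, or to share a long initial segment with a conjugate set---so that the set of distance sets participating in any nontrivial collision lies in a ``structured'' family of cardinality $O(N^{k-2})$. Setting $\calA = \{1 + C : C \in \binom{[N-1]}{k-1}_0 \setminus \calB\}$ then yields a Sidon system of size $\Theta_k(N^{k-1}) - O(N^{k-2}) = \Omega_k(N^{k-1})$.

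The main obstacle is precisely this combinatorial classification: as $k$ grows, a single sumset can factor as $X+Y$ in many inequivalent ways, and one has to argue uniformly that, outside a thin structured subfamily, such alternate factorizations cannot occur. A cleaner alternative, should the direct enumeration become unwieldy, is to impose an explicit sufficient genericity condition on $A'$ (e.g.\ that $A'$ is itself a Sidon set together with some additional spread condition), verify by an extremal argument on $A'+B'$ that this condition rules out all nontrivial collisions, and observe that almost all $k$-subsets of $\{0\}\cup[N-1]$ satisfy it, still leaving $\Omega_k(N^{k-1})$ sets. As a final fallback, a random deletion argument---including each distance set independently with constant probability, bounding the expected number of bad quadruples, and deleting one set from each---would preserve the $\Omega_k(N^{k-1})$ order while avoiding the explicit classification altogether.
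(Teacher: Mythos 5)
Your overall plan---keep (essentially all) distance sets in $\binom{[N-1]}{k}_0$, use the equivalence from Section~\ref{sec:not} to reduce the Sidon property to injectivity of $\{A',B'\}\mapsto A'+B'$, and delete the sets involved in nontrivial collisions---has a genuine gap at its central step: you never prove that the family $\calB$ of distance sets participating in some nontrivial collision has size $o(N^{k-1})$ (or even that the collisions can be destroyed by deleting a minority of the sets). For $k=3$ this follows from an explicit classification, but for $k\geq 4$ no such classification is available, and the claim that one can remove $o(N^{k-1})$ sets from $\binom{[N-1]}{k}_0$ to obtain a Sidon system is exactly the question this paper leaves open (see the concluding remarks): for composite $k$ there are genuinely new collision types, e.g.\ for $k=4$ the sets $\{0,a,b,a+b\}$, $\{0,c,d,c+d\}$, $\{0,a,c,a+c\}$, $\{0,b,d,b+d\}$ give $A'+B'=U'+V'$ with all sumsets of full size $16$, and ruling out that such phenomena (or others) contaminate a positive proportion of all $k$-sets is precisely the unproved structural statement your ``bootstrap'' sketch asserts. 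Your two fallbacks do not repair this. Requiring each $A'$ to be a Sidon set does not suffice even for $k=3$: in the paper's own list, $\{0,1,3\}+\{0,4,6\}=\{0,1,4\}+\{0,3,5\}$ is a nontrivial collision in which all four sets are Sidon sets, so any workable ``genericity'' condition would have to be something else entirely, and you do not specify or verify one. The random-deletion fallback fails quantitatively: the number of colliding quadruples among distance sets can exceed the number of sets by a polynomial factor (already for $k=4$ the construction above gives $\Theta(N^4)$ collisions among only $\Theta(N^3)$ sets), so with constant retention probability the expected number of surviving collisions swamps the number of surviving sets, and deleting one set per collision need not leave $\Omega_k(N^{k-1})$ sets; balancing the retention probability only yields a system of size about $N^{2(k-1)/3}$.

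The paper avoids all of this by an explicit construction rather than a deletion argument: take a Sidon set $0=a_0<a_1<\dots<a_{k-1}\leq 2k^2$, let $I_i$ be the integer points of $\frac{N}{a_{k-1}+1}[a_i,a_i+\tfrac12)$, and take all sets $\{0,b_1,\dots,b_{k-1}\}$ with $b_i\in I_i$. Since the $a_i$ form a Sidon set, the interval sums $I_i+I_j$ are pairwise disjoint, so from $U+V$ one can read off, block by block, the pairs $\{u_i,v_i\}$ and then disentangle them using the blocks $I_{i_0}+I_i$; this recovers $U$ and $V$ and shows the family of $\Omega_k(N^{k-1})$ sets is Sidon with no classification of collisions needed. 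If you want a correct proof of the proposition, you should switch to a construction of this type (or supply a genuine proof of your structural claim, which would in fact resolve the stronger open problem of asymptotic sharpness).
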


\begin{proof}
Let $k\geq 3$ be an integer, and let $A = \{a_0, a_1, \dots, a_{k-1}\}$ be a Sidon set such that \[0=a_0 < a_1 < \dots < a_{k-1} \leq 2k^2,\] which is well known to exist.
We can assume that $N \geq 2(2k^2+1)$. 
For $i=1,\dotsc,k-1$, denote by $I_i$ the interval \[I_i=\left(\frac{N}{a_{k-1}+1}\cdot [a_i,a_i+1/2)\right)\cap \N,\] with cardinality \[|I_i|=\floor*{\frac{N}{2(a_{k-1}+1)}}.\]
Let $I_0=\{0\}$.
Since $A$ is a Sidon set, for all $i,j,i',j'\in [0,k-1]$ we have \[(I_i+ I_j)\cap (I_{i'}+I_{j'})=\emptyset,\] unless $\{i,j\}=\{i',j'\}$. 
Consider the family \[\calB = \{\{b_0,\dots,b_{k-1}\} : b_i\in I_i\} \subseteq \binom{[N-1]}{k}_0,\] which has cardinality \[|\calB| \ge \floor*{\frac{N}{2(a_{k-1}+1)}}^{k-1} \geq \left(\frac{N}{2(2k^2+1)}\right)^{k-1}.\]
It remains to be proven that $\calB$ is a Sidon system.

Let $U=\{0<u_1<\dots<u_{k-1}\}$ and $V=\{0<v_1<\dots<v_{k-1}\}$ be sets in $\calB$. 
We will show that $U+V$ determines univocally the sets $U,V$. 
We have \[U+V\subseteq \bigcup_{0\le i\le j\le k-1} (I_i+I_j).\]
Since the intervals $I_i+I_j$ are pairwise disjoint, each of them contains at most the two elements \[u_i+v_j,\, u_j+v_i\in I_i+I_j\] from $U+V$.
In particular, for any $i\in[1,k-1]$, the set $(U+V)\cap I_i$ contains exactly one element if and only if $x_i=y_i$, and clearly, if this happens for all $i$, then $U=V$ and the set is determined univocally by the elements in $(U+V)\cap I_i$.
So assume that this is not the case, and let $i_0$ denote the least positive integer such that $(U+V)\cap I_{i_0}$ contains two elements.
Hence for all $0\leq i < i_0$, the elements $u_i=v_i$ are determined by $U+V$.
Without loss of generality, we can assume that $U \preceq V$.
Therefore \[u_{i_0}=\min((U+V)\cap I_{i_0}) < \max((U+V)\cap I_{i_0}) = v_{i_0},\] and so both $u_{i_0}$ and $v_{i_0}$ are determined by $U+V$.
Let $i>i_0$. 
We have \[(U+V)\cap I_i=\{u_i,v_i\}\;\text{and}\; (U+V)\cap (I_{i_0}+I_i)=\{u_{i_0}+v_i,u_i+v_{i_0}\}.\]
So by subtracting $u_{i_0}$ and $v_{i_0}$ from the elements in $(U+V)\cap (I_{i_0}+I_i)$, we can determine $u_i$ and $v_i$.
This shows that every sumset in $\calB+\calB$ can be written uniquely as a sum of two sets from $\calB$, and so $\calB$ is a Sidon system.

We can now shift every set in $\calB$ by $1$ such that the resulting family $\calB'$ is a Sidon system of $k$-subsets of $[N]$ with the same cardinality as $\calB$.
\end{proof}

\begin{proof}[Proof of Theorem~\ref{thm:lower}]
Propositions~\ref{prop:k2}, \ref{prop:k3}, and~\ref{prop:k4} provide the lower bounds, while Theorem~\ref{thm:upper} with $k=2$ gives the upper bound for the first equation.
\end{proof}

\section{Random Sidon Systems}\label{sec:random}

In this section we analyze the threshold probability of random families of  $k$-subsets of $[N]$ for the property of being a Sidon system. 

For some fixed $k\geq 2$, let \[\calB = \left\{A=(A_1,A_2) : A_1,A_2 \in \binom{[N]}{k},\; A_1 \preceq A_2\right\},\] denote the family of ordered pairs of $k$-subsets of $[N]$, and let \[\calC = \left\{(A,B) \in \calB\times\calB : A \prec  B,\; A_1+A_2=B_1+B_2\right\}\] be the family of distinct ordered pairs of elements in $\calB$ which violate the Sidon property.

For $(A,B)\in\calC$, let $I_{A,B}$ denote the indicator variable that  $\{A_1,A_2,B_1,B_2\}$ belongs to the random system $\calA$, and define \[X = \sum_{(A,B)\in\calC}I_{A,B}.\]
Therefore, \[\Pr(\calA \text{ is a Sidon system})=\Pr(X=0).\]
Finally, for $2 \leq \ell \leq 4$, define \[\calC(\ell) = \left\{(A,B)\in \calC : |\{A_1,A_2,B_1,B_2\}| = \ell\right\}.\]
Every $k$-subset of $[N]$ is contained in $\calA$ independently with probability $p$, hence for each $(A,B)\in\calC(\ell)$, we have that \[\Exp (I_{A,B})=p^\ell.\]
We will first prove the $1$-statement in Theorem~\ref{thm:randomsidon}, and begin by giving upper bounds for the cardinalities of the $\calC(\ell)$.

\begin{lem}\label{lem:CLupper}
For all $2 \leq \ell \leq 4$, we have that \[|\calC(\ell)| = O_k(N^{\ell(2k+1)/4}).\]
\end{lem}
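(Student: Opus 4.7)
The plan is to bound each $|\calC(\ell)|$ by reducing to a count of decompositions of a fixed sumset. The core technical device is a sublemma: for every $S \subseteq \Z$, the quantity
\[
r(S) := \left|\left\{(B_1,B_2) \in \tbinom{[N]}{k}^2 : B_1 \preceq B_2,\ B_1 + B_2 = S\right\}\right|
\]
satisfies $r(S) = O_k(N)$. The proof is short: for any decomposition $S = B_1 + B_2$ and $b = \min B_1$, the inclusion $b + B_2 \subseteq S$ gives $B_2 \subseteq S - b$, a set of size at most $|S| \le k^2$; hence $B_2$ is determined up to $\binom{k^2}{k} = O_k(1)$ choices once $b \in [1,N]$ is fixed, and summing over the $O(N)$ values of $b$ gives $r(S) = O_k(N)$. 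I also record the global identity $\sum_S r(S) = |\calB| = O(N^{2k})$.

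The case $\ell = 2$ is handled by direct case analysis: for whichever two of $A_1, A_2, B_1, B_2$ coincide, the equation $A_1 + A_2 = B_1 + B_2$ combined with $|\{A_1,A_2,B_1,B_2\}|=2$ forces either the two distinct sets $X,Y$ to satisfy $X=Y$ or forces $A = B$, contradicting $A \prec B$. Hence $\calC(2) = \emptyset$ and the bound is vacuous. For $\ell = 4$, every element of $\calC(4)$ corresponds to an unordered pair of distinct decompositions of a common sumset $S$, so
\[
|\calC(4)| \le \sum_S \binom{r(S)}{2} \le \tfrac12 \max_S r(S) \cdot \sum_S r(S) = O_k(N) \cdot O(N^{2k}) = O_k(N^{2k+1}),
\]
which is the required $\ell = 4$ bound.

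For $\ell = 3$, I would first split into two sub-cases via case analysis on which pair of indices coincides. The symmetric sub-case has $A_1 = A_2 = X$ or $B_1 = B_2$, with sum equation $2X = P + Q$ for distinct $X, P, Q$; summing $r(2X)$ over $X \in \binom{[N]}{k}$ gives at most $O(N^k) \cdot O_k(N) = O_k(N^{k+1})$, well within $O_k(N^{3(2k+1)/4})$ for $k \ge 1$. In the cancellative sub-case, $A_i = B_j = X$ with $P = A_{3-i} \ne Q = B_{3-j}$, and the sum equation reduces to $X + P = X + Q$ with $X, P, Q$ three distinct $k$-sets. Here the Minkowski-intersection argument gives $Q \subseteq \bigcap_{x \in X}((X+P)-x)$, a set of size at most $k^2$, so $Q$ is determined from $(X, P)$ up to $O_k(1)$ choices. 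One then estimates the contribution via the additive-energy quantity $\sum_W \sum_S m(S,W)^2$, where $m(S,W) = |\{P : W + P = S\}| \le O_k(1)$, applying a Cauchy--Schwarz that combines both the multiplicity bound and the sumset-decomposition bound $r(S) = O_k(N)$.

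The main obstacle is expected to lie in the cancellative sub-case of $\ell = 3$. A naive bound of the form ``pairs $(X, P)$ with $|X+P| < k^2$, attached to $O_k(1)$ choices of $Q$'' gives $O_k(N^{2k-1})$, which is within the target $O_k(N^{3(2k+1)/4})$ only for $k \le 3$; for $k \ge 4$ one must argue more carefully, using the sharper bound $r(S) = O_k(N)$ together with the fact that most pairs $(P, Q)$ admit no common summand $W$, formalized via second-moment/Cauchy--Schwarz reasoning on $\sum_{P,Q} |\{W : W + P = W + Q\}|$ that interpolates between the two constraints.
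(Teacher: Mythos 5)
Your $\ell=4$ argument is sound (the sublemma $r(S)=O_k(N)$ plus $\sum_S r(S)=O(N^{2k})$ is a clean, essentially equivalent route to the paper's $O_k(N^{2k+1})$), but the other two cases have genuine problems. For $\ell=2$, the claim that $\calC(2)=\emptyset$ is false: take $k=4$, $X=\{0,1,3,4\}$ and $Y=\{0,2,3,4\}$; then $X+X=X+Y=\{0,1,\dots,8\}$ with $X\neq Y$, and its translates already give $\Omega(N)$ elements of $\calC(2)$. The needed bound is still easy (from $X+X=X+Y$ or $X+X=Y+Y$ one gets $Y-\min Y\subseteq X'+X'$, hence $O_k(1)$ choices of $Y'$ per $X$ and a determined minimum, so $|\calC(2)|=O_k(N^k)\le O_k(N^{(2k+1)/2})$), but the case analysis you describe does not establish it, so as written this step is wrong rather than merely incomplete.

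The more serious gap is the cancellative sub-case of $\ell=3$, i.e.\ counting triples of distinct $k$-sets with $X+P=X+Q$; this is where the lemma's real content lies, and your proposal contains no argument that reaches the target $O_k(N^{3(2k+1)/4})$ for $k\ge 4$. The quantity $\sum_W\sum_S m(S,W)^2$ with $m(S,W)=O_k(1)$ only yields $O_k(N^{2k})$, and that bound is dominated by the diagonal $P=Q$; to extract the off-diagonal contribution $\sum_W\sum_S m(S,W)(m(S,W)-1)$ you would need to know how often $m(S,W)\ge 2$, which is exactly the count being sought, so the proposed Cauchy--Schwarz is circular as sketched. The paper closes this case by a linear-algebraic idea absent from your outline: from $A+b\subseteq A+C$ one gets coincidences $a_i+b=a_{\pi(i)}+c_{\tau(i)}$ for permutations $\pi,\tau$, written as $M\cdot(a_1,\dots,a_k,b,c_1,\dots,c_k)^T=0$, and a combinatorial argument shows $\rk(M)\ge\ceil{k/2}+|B\setminus C|$, so at most $k+\floor{k/2}\le 3k/2$ coordinates are free; summing over the $O_k(1)$ matrices gives $|\calC(3)|=O_k(N^{3k/2})\le O_k(N^{3(2k+1)/4})$. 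Without this (or some substitute bounding the dimension of the solution set), your write-up proves the lemma only for $k\le 3$, as you yourself acknowledge.
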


\begin{proof}
We will use two slightly different approaches.
The first one uses the equivalence described in Equations~\eqref{eq:equiva} and~\eqref{eq:equivb}, namely that two sumsets are equal if and only if the sums of the minimal elements and the sumsets of the distance sets are equal.

Suppose we have four pairwise distinct $k$-sets $A,B,C,D\subset [N]$ such that $A+B=C+D$, which by the previously mentioned equivalence means that we also have $a+b=c+d$ and $A'+B'=C'+D'$\footnote{Recall that for a set $A$, $a$ denotes its minimal element, and $A'$ the set $A-a$.}.
Now, since $0$ will be contained in each of the sets $A',B',C'$ and $D'$, we see that all four of them will be contained in $A'+B'=C'+D'$.
Hence, after fixing $A'$ and $B'$, the potential elements of $C'$ and $D'$ must be chosen from the at most $k^2$ elements in $A'+B'$, and hence there are at most $O_k(N^{2k-2})$ choices for the quadruple $(A',B',C',D')$.
Since we have the relation $a+b=c+d$, at most three of these elements can be chosen freely, which results in \[|\calC(4)|=O_k(N^{2k+1}).\]

A similar argument works for the case $\ell=2$, by noting that this implies a sumset equality of the form $A+A=A+B$ or $A+A=B+B$, with $A\neq B$, and so the elements of $B'$ have to be chosen from $A'+A'$.
So there are $O_k(N^{k-1})$ choices for the pair $(A',B')$, and since the minimal elements satisfy the equality $2a=a+b$ or $2a=2b$, and hence $a=b$, only one of them can be chosen freely, which results in the upper bound \[|\calC(2)| = O_k(N^{k}).\]

For $\ell=3$, there are two possible types of sumset equality, namely \[A+A=B+C\] or \[A+B=A+C.\]
The first case can be handled the same way as before, noting that one can choose at most two of the three minimal elements $a,b,c$ freely, and so there are at most $O_k(N^{k+1})$ such bad solutions.
The second type requires us to make a slightly different argument.
Since $B\neq C$, we see that $B\setminus C \neq \emptyset$.
Let us for now assume that there is a unique $b\in B\setminus C$, and let $A=\{a_1,\dots,a_k\}$ and $C=\{c_1,\dots,c_k\}$.
Since $A+B=A+C$, we have that $A+b \subseteq A+C$, and hence there exist functions $\pi,\tau \colon[k]\to [k]$ such that for all $i\in[k]$, \[a_i + b = a_{\pi(i)}+c_{\tau(i)}.\]
Furthermore, since $b \notin C$, we see that $a_i \neq a_{\pi(i)}$ for all $i\in[k]$.
Write this linear system of equations in matrix form as
\begin{equation}\label{eq:l=3}
M \cdot (a_1,\dots,a_k,b,c_1,\dots,c_k)^T = 0,
\end{equation}
then the $i$-th row of $M$ will have $1$s in the $i$-th and $(k+1)$-st column, as well as $-1$s in the $\pi(i)$-th and $(k+1+\tau(i))$-th one, and $0$s everywhere else.
We will show that $M$ has rank at least $\ceil{k/2}+1$.
Let us ignore the last $k$ columns and only focus on those corresponding to $A$ and $b$.
We start with row $1$, which is clearly nonzero.
Call any row that has its $1$ entry in a column in which a previously picked row has a $-1$ entry \emph{closed}.
For example, at the start, only row $\pi(1)$ is closed.
Proceed by successively picking a row among the non-closed ones.
Since every row contains only a single $-1$ entry, it is not hard to see that at the end of this process, we have at least $\ceil{k/2}$ linearly independent rows.
Finally, because the $(k+1)$-st column in every row vector is $1$, we can pick a single arbitrary closed row and end up with $\ceil{k/2}+1$ linearly independent rows, which implies the lower bound on the rank of $M$.
Now note that if $|B\setminus C| > 1$, we can do the same, but add more columns for the remaining elements in this set.
Since we get at least one linearly independent row per new column as well, the general lower bound will hence be \[\rk(M) \geq \ceil{k/2}+|B\setminus C|.\]
Hence, at most \[2k + |B\setminus C| - \left(\ceil*{\frac{k}{2}}+|B\setminus C|\right) = k+\floor*{\frac{k}{2}} \leq 3k/2\] of the elements in $A,C$ and $B\setminus C$ can be chosen freely to satisfy the equation \eqref{eq:l=3}. 
Since elements from $B\cap C$ have to be in $C$, the same is true (up to maybe some factor only depending on $k$) for $A,B,C$. 
Since there are only $O_k(1)$ possible matrices $M$ which may lead to an equation of the form $A+B=A+C$, we conclude that \[|\calC(3)| = O_k(N^{3k/2}).\]
This completes the proof.
\end{proof}

We are now ready to prove the $1$-statement of Theorem~\ref{thm:randomsidon}.

\begin{prop}\label{prop:randomsidon_1}
If $\calA\in\calS(N,k,p)$ is a random family with $p=o(N^{-(2k+1)/4})$, then \[\lim_{N\to\infty}\Pr(\calA\text{ is a Sidon system})=1.\]
\end{prop}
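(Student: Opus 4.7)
The plan is to use the first moment method on the random variable $X=\sum_{(A,B)\in \calC}I_{A,B}$ defined just before Lemma~\ref{lem:CLupper}. Since the system $\calA$ fails to be Sidon if and only if some $(A,B)\in\calC$ has both pairs contained in $\calA$, i.e.\ if and only if $X\geq 1$, it suffices by Markov's inequality to show that $\Exp(X)\to 0$ as $N\to\infty$.

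Partitioning $\calC=\calC(2)\sqcup \calC(3)\sqcup \calC(4)$ by the number of distinct sets among $\{A_1,A_2,B_1,B_2\}$, linearity of expectation gives
\begin{equation*}
\Exp(X)=\sum_{\ell=2}^{4}|\calC(\ell)|\,p^{\ell},
\end{equation*}
since each $k$-set is in $\calA$ independently with probability $p$, so $\Exp(I_{A,B})=p^{\ell}$ whenever $(A,B)\in\calC(\ell)$. Plugging in the bounds from Lemma~\ref{lem:CLupper}, we obtain
\begin{equation*}
\Exp(X)=\sum_{\ell=2}^{4}O_k\!\left(N^{\ell(2k+1)/4}\,p^{\ell}\right)=\sum_{\ell=2}^{4}O_k\!\left(\bigl(N^{(2k+1)/4}p\bigr)^{\ell}\right).
\end{equation*}

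Under the hypothesis $p=o(N^{-(2k+1)/4})$, the quantity $N^{(2k+1)/4}p$ tends to $0$, so each of the three terms tends to $0$, hence $\Exp(X)=o(1)$. Markov's inequality then yields $\Pr(X\geq 1)\leq \Exp(X)=o(1)$, so $\Pr(\calA \text{ is Sidon})=\Pr(X=0)\to 1$, as required.

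Because Lemma~\ref{lem:CLupper} has already done the combinatorial work of counting bad configurations, no further obstacle is anticipated here; the exponent $(2k+1)/4$ in the threshold is exactly chosen so that each $|\calC(\ell)|p^{\ell}$ is a power of the dimensionless quantity $N^{(2k+1)/4}p$, and this is what makes the three terms all go to $0$ simultaneously under a single hypothesis on $p$.
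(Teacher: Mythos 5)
Your proposal is correct and is essentially identical to the paper's own argument: both apply Markov's inequality to $X$, split $\calC$ into the classes $\calC(\ell)$ for $2\le\ell\le4$, and invoke Lemma~\ref{lem:CLupper} together with $p=o(N^{-(2k+1)/4})$ to conclude $\Exp(X)=o(1)$. No issues.
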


\begin{proof}
By linearity of expectation and Markov's inequality, we see that
\begin{equation} \label{eq:randomsidon_1}
\Pr (X \geq 1) \leq \Exp (X) = \sum_{(A,B)\in\calC} \Exp (I_{A,B}) = \sum_{\ell=2}^4\sum_{(A,B)\in\calC(\ell)} \Exp(I_{A,B})= \sum_{\ell=2}^4 |\calC(\ell)|p^\ell.
\end{equation}
Since $|\calC(\ell)|=O_k(N^{\ell(2k+1)/4})$ by Lemma~\ref{lem:CLupper} and $p=o(N^{-(2k+1)/4})$, we thus have \[\Pr(X \geq 1) = o(1),\] proving the statement.
\end{proof}

We will next prove the $0$-statement.
Let \[\calC' = \{(A,B) = ((A_1,A_2),(B_1,B_2)) \in \calC(4) : A_1'\neq A_2',\, A_1'=B_1' \text{ and } A_2'=B_2'\}.\]
This implies in particular that for any $(A,B)\in\calC'$, the minimal elements $a_i$, $b_i$ satisfy $a_i \neq b_i$ for $i=1,2$.
Let \[Y = \sum_{(A,B)\in\calC'}I_{A,B},\] then clearly \[\Pr(X = 0) \leq \Pr(Y = 0),\] and so it suffices to show the $0$-statement for $\calC'$.
Let $\calD$ denote the family of elements $((A,B),(C,D))$ in $\calC'\times\calC'$ such that $(A,B)\neq(C,D)$ and \[\{A_1,A_2,B_1,B_2\} \cap \{C_1,C_2,D_1,D_2\} \neq \emptyset,\] that is, it contains the pairs of distinct elements in $\calC'$ which share at least one $k$-set.
For $4\le \ell\le 7$ we define the families \[\calD(\ell)=\{((A,B),(C,D))\in\calD : |\{A_1,A_2,B_1,B_2, C_1,C_2,D_1,D_2\}| =\ell\}.\]

We first give a lower bound for $|\calC'|$.

\begin{lem}\label{lem:C4lower}
$|\calC'| = \Omega_k(N^{2k+1})$.
\end{lem}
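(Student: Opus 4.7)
The plan is to produce $\Omega_k(N^{2k+1})$ elements of $\calC'$ by a direct construction: fix a common pair of distance sets $(P,Q)$ with $P\neq Q$ and then vary the four minimal elements along a nontrivial additive quadruple $a_1+a_2=b_1+b_2$. The structure of $\calC'$ forces exactly this form (the distance-set conditions $A_1'=B_1'$ and $A_2'=B_2'$ pin down a common pair), so nothing is lost morally by restricting to this template.

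First I would fix $P,Q \in \binom{[N-1]}{k}_0$ with $P\neq Q$ and $\max P,\max Q \leq \floor{N/3}$; the number of such ordered pairs is $\Omega_k(N^{2(k-1)})$, because the $k-1$ nonzero elements of each set are free in an interval of length $\Theta(N)$. Next I would count ordered quadruples $(a_1,a_2,b_1,b_2)$ of integers in $[\floor{N/3},\floor{2N/3}]$ with $a_1<a_2$, $b_1<b_2$, $(a_1,a_2)\neq (b_1,b_2)$, and $a_1+a_2=b_1+b_2$. Writing $R(s)$ for the number of representations $s=x+y$ with $x<y$ in that interval, we have $\sum_s R(s)=\Theta(N^2)$ supported on $\Theta(N)$ values of $s$, so Cauchy--Schwarz yields $\sum_s R(s)(R(s)-1)=\Omega(N^3)$. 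Moreover, two distinct ordered sum-equal pairs with strictly increasing entries have disjoint supports (otherwise the shared entry and the common sum would force the two pairs to coincide), so the four minima are automatically pairwise distinct.

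For each such choice I would set $A_1=a_1+P$, $A_2=a_2+Q$, $B_1=b_1+P$, $B_2=b_2+Q$. The range restrictions ensure $A_1,A_2,B_1,B_2\subseteq [N]$, and the equivalence \eqref{eq:equiva}--\eqref{eq:equivb} immediately gives $A_1+A_2=B_1+B_2$. By construction $A_1'=B_1'=P$ and $A_2'=B_2'=Q\neq P$, and comparing minimal elements shows $A_1\prec A_2$ and $B_1\prec B_2$. The four $k$-sets are pairwise distinct, since two translates of the same distance set coincide only when their minima coincide, while translates of different distance sets never coincide. Exactly one of the orderings $((A_1,A_2),(B_1,B_2))$ and $((B_1,B_2),(A_1,A_2))$ lies in $\calC'$, costing a harmless factor $2$. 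Multiplying gives $|\calC'|=\Omega_k(N^{2(k-1)}\cdot N^3)=\Omega_k(N^{2k+1})$. The construction is essentially routine; the only step needing some care is the Cauchy--Schwarz lower bound of $\Omega(N^3)$ on nontrivial additive quadruples in a subinterval of length $\Theta(N)$, which is a standard computation.
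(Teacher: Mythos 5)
Your proposal is correct and is essentially the paper's own argument: fix $\Omega_k(N^{2(k-1)})$ ordered pairs of distinct distance sets and then choose $\Omega(N^3)$ quadruples of minimal elements $a_1,a_2,b_1,b_2$ with $a_1+a_2=b_1+b_2$, using the equivalence \eqref{eq:equiva}--\eqref{eq:equivb} to get elements of $\calC'$. The only cosmetic difference is that the paper simply picks $a_1<b_1<b_2<a_2$ directly (giving $\Omega(N^3)$ choices), whereas you obtain the additive quadruples via a Cauchy--Schwarz count in a subinterval; both yield the same bound.
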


\begin{proof}
We can choose $\Omega_k (N^{2(k-1)})$ different pairs $A_1'\neq A_2'$ of sets in ${[N]\choose k}_0$ and, for every such pair, we can choose $\Omega (N^3)$ elements $a_1<b_1<b_2<a_2\in [N]$ with $a_1+a_2=b_1+b_2$ and $(a_1+A_1'),(a_2+A_2'),(b_1+A_1'),(b_2+A_2')\in {[N]\choose k}$ which form elements in $\calC'$, and so the statement follows.
\end{proof}

We next give upper bounds for $|\calD(\ell)|$ along the same lines as for the upper bounds of $|\calC (\ell)|$ in Lemma~\ref{lem:CLupper}. 

\begin{lem}\label{lem:DLupper}
For $4\leq\ell\leq 7$, we have that \[|\calD(\ell)| = \begin{cases}0, &\text{if } \ell=4,5\\O_k(N^{3k+1}), &\text{if }\ell=6\\O_k(N^{3k+2}), &\text{if }\ell=7 \end{cases}\]
\end{lem}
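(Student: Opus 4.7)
The plan rests on the rigid parameterization of $\calC'$: every $(A,B) \in \calC'$ is determined by an ordered pair of distinct distance sets $X = A_1' = B_1'$ and $Y = A_2' = B_2'$ in $\binom{[N]}{k}_0$, together with four minimal elements $a_1, a_2, b_1, b_2$ satisfying $a_1 + a_2 = b_1 + b_2$ (plus the lex orderings $A_1 \preceq A_2$, $B_1 \preceq B_2$, $A \prec B$). This gives $|\calC'| = \Theta_k(N^{2k+1})$ matching Lemma~\ref{lem:C4lower}, and partitions $\{A_1,A_2,B_1,B_2\}$ into an $X$-type pair $\{A_1,B_1\}$ and a $Y$-type pair $\{A_2,B_2\}$.

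For $\ell = 4$, the equality $\{A_1,A_2,B_1,B_2\} = \{C_1,C_2,D_1,D_2\}$ forces the multisets of distance sets to agree, so $\{X_C,Y_C\} = \{X,Y\}$. The interaction of $C_1 \preceq C_2$ with $A_1 \preceq A_2$ rules out the swap $X_C = Y$, hence $\{C_1,D_1\} = \{A_1,B_1\}$ and $\{C_2,D_2\} = \{A_2,B_2\}$. Among the four resulting assignments, the two ``crossed'' ones impose $a_1 + b_2 = b_1 + a_2$, which combined with $a_1 + a_2 = b_1 + b_2$ yields $a_1 = b_1$, contradicting the distinctness of minimal elements; the assignment $(C,D) = (B,A)$ violates $C \prec D$; and $(C,D) = (A,B)$ is excluded. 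For $\ell = 5$, an analogous analysis on the 3 shared sets pins down the distance-set pairing and shows that once the shape of the overlap is fixed, the equation $c_1 + c_2 = d_1 + d_2$ together with $a_1 + a_2 = b_1 + b_2$ either forces the fourth set to coincide (collapsing to $\ell = 4$) or forces $a_1 = b_1$. Hence $|\calD(4)| = |\calD(5)| = 0$.

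For $\ell = 6$, I iterate over $(A,B) \in \calC'$ ($O_k(N^{2k+1})$ choices) and choose a pair of shared sets $\{S_1,S_2\} \subseteq \{A_1,A_2,B_1,B_2\}$ ($O(1)$ choices), then bound the number of $(C,D) \in \calC'$ containing both. If $S_1' = S_2'$, the pair must form either $\{C_1,D_1\}$ or $\{C_2,D_2\}$, determining two minimal elements of $(C,D)$; the opposite distance set is free ($O_k(N^{k-1})$ options), and one remaining minimal element is free ($O(N)$), for $O_k(N^k)$ options. If $S_1' \neq S_2'$, both distance sets of $(C,D)$ and two minimal elements are forced, leaving $O(N)$ free choices. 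Multiplying yields $|\calD(6)| = O_k(N^{3k+1})$.

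For $\ell = 7$, the unique shared set $S$ fixes one distance set and one minimal element of $(C,D)$. The opposite distance set is free ($O_k(N^{k-1})$), and the remaining three minimal elements satisfy one linear relation, leaving $O(N^2)$ degrees of freedom. Hence each $(A,B)$ together with $S$ yields $O_k(N^{k+1})$ choices for $(C,D)$, and multiplying by $O_k(N^{2k+1})$ gives $|\calD(7)| = O_k(N^{3k+2})$. The main obstacle is the case analysis for $\ell = 4, 5$: ruling out every configuration requires tracking simultaneously the lex orderings on pairs, the distance-set partition of the shared sets, and the two sumset identities, and enumerating the crossed and swapped assignments carefully.
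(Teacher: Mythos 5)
Your plan follows the paper's route for all four cases: rigidity of $\calC'$ for $\ell=4,5$, and counting completions $(C,D)$ of a fixed $(A,B)\in\calC'$ for $\ell=6,7$. Your $\ell=4$, $\ell=6$ and $\ell=7$ counts agree with the paper's and are sound; in fact for $\ell=6$ your explicit split according to whether the two shared sets have equal distance sets is exactly the case that justifies the $O_k(N^k)$ completions per pair.

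The gap is at $\ell=5$. The dichotomy you assert (the fourth set of $(C,D)$ either coincides with the remaining set of $(A,B)$, or one is forced into $a_1=b_1$) fails for the crossed overlap $C_1=A_1$, $D_1=B_1$, $C_2=B_2$: membership in $\calC'$ then forces $D_2'=A_2'$ and $d_2=c_1+c_2-d_1=a_1+b_2-b_1$, and combining with $a_1+a_2=b_1+b_2$ one checks that $d_2=a_2$ or $d_2=b_2$ would each imply $a_1=b_1$; since $a_1\neq b_1$, the set $D_2$ is genuinely new, and all ordering constraints can be met. Concretely, for $k=2$ take $A_1=\{1,2\}$, $A_2=\{11,13\}$, $B_1=\{2,3\}$, $B_2=\{10,12\}$ and $C_1=\{1,2\}$, $C_2=\{10,12\}$, $D_1=\{2,3\}$, $D_2=\{9,11\}$: both pairs lie in $\calC'$, they share three sets, and the union consists of five sets, so $\calD(5)\neq\emptyset$. (The paper's own one-line argument for $\ell=5$ has the same defect: the fourth set is determined by the other three, but nothing forces it to be a set of $(A,B)$.) What the determinacy does give is $|\calD(5)|\leq O_k(1)\cdot|\calC(4)|=O_k(N^{2k+1})$, which is all that Proposition~\ref{prop:randomsidon_0} needs, since $N^{2k+1}p^5=o(1)$ throughout the range $p=o(N^{-(2k+1)/4})$ used there. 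So you should replace the claimed emptiness for $\ell=5$ by this bound; as written, that step of your argument (like the $\ell=5$ clause of the lemma itself) cannot be pushed through.
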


\begin{proof}
We first prove that $\calD(4)$ and $\calD(5)$ have to be empty.
Indeed, for $\ell=5$, three of the sets in $(C,D)$ will be fixed by $(A,B)$, and so, by the definition of $\calC'$, the last one will be as well.

For $\ell=4$, in order to get a nontrivial solution, $(C,D)$ must define an equation of the form $A_1+B_2=B_1+A_2$ and hence $a_1+b_2=b_1+a_2$, which together with $a_1+a_2=b_1+b_2$ implies $a_1=b_1$ and $a_2=b_2$, a contradiction to the definition of $\calC'$.

Suppose $\ell =7$.
For each pair $(A,B)$ there is a fixed set in the pair $(C,D)$, say e.g. $C_1$.
Since $C'_1,C'_2,D'_1,D'_2\in {[N]\choose k}_0$, there are $O_k(N^{k-1})$ choices for $C'_2$ and only $O_k(1)$ choices of $D'_1$ and $D'_2$ afterwards, since they have to be chosen from the elements of $C_1'+C_2'$. 
Since $c_1$ is fixed and we have $c_1+c_2=d_1+d_2$, there are $O(N^2)$ choices for $c_2,d_1,d_2$. 
Summarizing, \[|\calD (7)|= |\calC'|O_k(N^{k+1})=O_k(N^{3k+2}).\]

For $\ell =6$, two of the sets in the pair $(C,D)$ are fixed by the pair $(A,B)$. 
By repeating the above reasoning, we have at most $O(N)$ choices for $c_1,c_2,d_1,d_2$ giving \[|\calD (6)|= |\calC'|O_k(N^{k})=O_k(N^{3k+1}).\]
This completes the proof.
\end{proof}

We are now ready to prove the $0$-statement of Theorem~\ref{thm:randomsidon}.

\begin{prop}\label{prop:randomsidon_0}
If $\calA\in\calS(N,k,p)$ is a random family with $p=\omega(N^{-(2k+1)/4})$, then \[\lim_{N\to\infty}\Pr(\calA\text{ is a Sidon system})=0.\]
\end{prop}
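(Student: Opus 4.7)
The plan is to apply the second moment method to $Y = \sum_{(A,B)\in\calC'} I_{A,B}$, using the bounds already established in Lemmas~\ref{lem:C4lower} and~\ref{lem:DLupper}. Since $\Pr(X=0)\le\Pr(Y=0)$, it suffices to show $\Pr(Y=0)\to 0$, and by Chebyshev's inequality this reduces to proving
\[
\Pr(Y=0)\;\le\;\frac{\Var(Y)}{\Exp(Y)^2}\;=\;o(1).
\]

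For the first moment, each $(A,B)\in\calC'$ involves four pairwise distinct $k$-sets, so $\Exp(I_{A,B})=p^4$. Combining with Lemma~\ref{lem:C4lower} gives $\Exp(Y)=|\calC'|p^4=\Omega_k(N^{2k+1}p^4)$, and because $p=\omega(N^{-(2k+1)/4})$ we have $p^4=\omega(N^{-(2k+1)})$, so $\Exp(Y)\to\infty$.

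For the second moment, I would decompose
\[
\Var(Y)\;\le\;\Exp(Y)+\sum_{((A,B),(C,D))\in\calD}\Exp(I_{A,B}I_{C,D}),
\]
using that when the two indexing pairs in $\calC'$ share no $k$-set the variables are independent. The inner sum splits according to $|\{A_1,A_2,B_1,B_2,C_1,C_2,D_1,D_2\}|=\ell$, contributing $|\calD(\ell)|p^\ell$. By Lemma~\ref{lem:DLupper}, $\calD(4)=\calD(5)=\emptyset$, so only $\ell=6,7$ remain, giving
\[
\Var(Y)\;\le\;\Exp(Y)+O_k(N^{3k+1})p^6+O_k(N^{3k+2})p^7.
\]

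It then remains to verify three limits. Writing $\Exp(Y)^2=\Omega_k(N^{4k+2}p^8)$, the ratio $\Exp(Y)/\Exp(Y)^2=1/\Exp(Y)\to 0$, so the diagonal term is negligible. For the $\ell=6$ term, the ratio is of order $N^{3k+1}p^6/(N^{4k+2}p^8)=1/(N^{k+1}p^2)$, which is $o(1)$ since $N^{k+1}p^2=\omega(N^{k+1-(2k+1)/2})=\omega(N^{1/2})$. For the $\ell=7$ term, the ratio is of order $N^{3k+2}p^7/(N^{4k+2}p^8)=1/(N^kp)$, and $N^kp=\omega(N^{k-(2k+1)/4})=\omega(N^{(2k-1)/4})\to\infty$ for $k\ge 2$. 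All three contributions therefore vanish and Chebyshev's inequality yields the conclusion.

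The substance of the argument is already packaged in the earlier lemmas; the only real obstacle is checking that the case $\ell=7$ (the weakest bound on $|\calD(\ell)|$) still beats $\Exp(Y)^2$ in the relevant regime, which is precisely where the exponent $(2k+1)/4$ in the threshold is calibrated and where the restriction $k\ge 2$ is used.
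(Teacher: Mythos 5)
Your argument is correct, but it takes a different route from the paper: you use the second moment method (Chebyshev), whereas the paper applies the Janson inequality to the same variable $Y=\sum_{(A,B)\in\calC'}I_{A,B}$, bounding $\Pr(Y=0)\le(1-p^4)^{|\calC'|}\exp(\Delta)$ with $\Delta=\sum_\ell|\calD(\ell)|p^\ell$. Both proofs feed on exactly the same ingredients, namely $|\calC'|=\Omega_k(N^{2k+1})$ from Lemma~\ref{lem:C4lower} and the bounds $\calD(4)=\calD(5)=\emptyset$, $|\calD(6)|=O_k(N^{3k+1})$, $|\calD(7)|=O_k(N^{3k+2})$ from Lemma~\ref{lem:DLupper}, and your three ratio computations are right: $1/\Exp(Y)=o(1)$ since $N^{2k+1}p^4=\omega(1)$, $N^{3k+1}p^6/(N^{4k+2}p^8)=1/(N^{k+1}p^2)=o(N^{-1/2})$, and $N^{3k+2}p^7/(N^{4k+2}p^8)=1/(N^kp)=o(N^{-(2k-1)/4})$ (this last exponent is positive already for $k\ge1$, so $k\ge2$ is not really what saves the $\ell=7$ term; it is needed earlier, in Lemma~\ref{lem:C4lower}, to make $\calC'$ nonempty and large). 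The trade-off between the two approaches: Janson gives the much stronger bound $\exp(-\omega(1))$ on $\Pr(Y=0)$, but as the paper executes it the simple form of the inequality forces one to work in a window of $p$ just above the threshold where $\Delta=o(1)$ and then invoke monotonicity of the Sidon property to cover all larger $p$; your Chebyshev bound is only polynomially small, but it applies verbatim to every $p=\omega(N^{-(2k+1)/4})$ (including constant $p$) with no monotonicity step, since all three ratios only improve as $p$ grows. The variance decomposition you use, $\Var(Y)\le\Exp(Y)+\sum_{((A,B),(C,D))\in\calD}\Exp(I_{A,B}I_{C,D})$, is justified because indicator pairs sharing no $k$-set are genuinely independent in the model $\calS(N,k,p)$, so the proposal is complete as stated.
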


\begin{proof}
By the Janson inequality (see e.g. Theorem 1.1 in Chapter 8 of~\cite{AS08}) we obtain
\begin{equation}\label{eq:randomsidon_0}
\Pr (Y=0) \leq \prod_{(A,B)\in\calC'} \Pr (I_{A,B}=0) \cdot \exp(\Delta) = (1-p^4)^{|\calC'|}\exp(\Delta),
\end{equation}
where 
\begin{equation}\label{eq:randomsidon_delta} 
\begin{split}
\Delta &= \sum_{((A,B),(C,D))\in\calD} \Pr (I_{A,B}I_{C,D}=1)\\
&= \sum_{\ell=5}^7 \sum_{((A,B),(C,D))\in\calD(\ell)} \Pr(I_{A,B}I_{C,D}=1)\\
&= \sum_{\ell=4}^7 |\calD(\ell)|p^\ell.
\end{split}
\end{equation}

By inserting the upper bounds from Lemma~\ref{lem:DLupper} into~\eqref{eq:randomsidon_delta} we get \[\Delta = \sum_{\ell=4}^7 |\calD(\ell)|p^\ell = O_k(N^{3k+2}p^7+N^{3k+1}p^6),\] and for $p = o(N^{-(2k+1)/4})$, it is easy to check that \[N^{3k+1}p^6 = \omega(N^{3k+2}p^7) \quad\text{and}\quad N^{3k+1}p^6 = o(1),\]
which implies \[\Delta = o(1).\]
Using this and the lower bound on $|\calC'|$ obtained in Lemma~\ref{lem:C4lower}, \eqref{eq:randomsidon_0} therefore gives 
\begin{align*}
\Pr (X=0) 
&\leq \Pr(Y=0) \\
&\leq (1-p^4)^{|\calC'|}\exp(\Delta) \\
&\leq \exp(-\Omega_k(p^4 N^{(2k+1)})+o(1)) \\
&= \exp(-\omega(1)).
\end{align*}
This completes the proof.
\end{proof}

\section{Concluding remarks}\label{sec:final}

\subsection{Asymptotically sharp lower bounds for $F_k(N)$}

The most begging question left open by the current work is whether Theorem~\ref{thm:lower} is asymptotically sharp for $k\geq 4$.
Since for $k\geq 3$, translations cannot generate a significant number of new sets, this is essentially equivalent to saying that one can remove $o(N^{k-1})$ sets from $\binom{[N-1]}{k}_0$ such that the resulting family is a Sidon system.
If we consider the family $\binom{[N-1]}{k}_0+\binom{[N-1]}{k}_0$, then a randomly chosen element $S$ will asymptotically almost surely have cardinality $k^2$, so it is reasonable to assume that sumsets of this cardinality are the most important case to consider.
However, while sumsets of this type have only one representation in the case $k=3$, this will in general not be true anymore for larger values of $k$.
To see this, consider for instance the case $k=4$, and let $a,b,c,d$ be integers such that 
\begin{equation*}
S=\{0,a\}+\{0,b\}+\{0,c\}+\{0,d\} \quad\text{and}\quad |S|=16.
\end{equation*}
Then, in general, we have three different representations for $S$ as a sumset of two $4$-sets, namely by pairing $\{0,a\}$ with one of the remaining three $2$-sets, and pairing the other two.
Similar constructions can be done for any $k$ that is composite.
Numerical experiments for moderate values of $N$ and $k$ ($N=100$ for $k=4$, $N=60$ for $k=5$) suggest that these might be the only instances of sumsets of cardinality $k^2$ that violate the Sidon property.
However, note that this does in general not refute the statement in the beginning of this section, since $k$-sets constructed in such a way will always obey some linear equations.
In the case $k=4$ for example, the largest element of a $4$-set always has to be the sum of the other two nonzero elements, and hence there are only $N^2=o(N^3)$ such sets, and one can remove them from $\binom{[N-1]}{4}_0$ without affecting the asymptotic density.

\subsection{$B_h[g]$ systems}

It is also possible to further generalize the definition of a Sidon system, in the same way that Sidon sets can be generalized to so called $B_h[g]$ sets.
For a family $\calA$ of integer subsets, a set of integers $C$, and an integer $h\geq2$, let $r_{h\calA}(C)$ denote the number of different multisets $\{A_1,A_2,\dots,A_h\}$, $A_i\in\calA$ such that $A_1+A_2+\dots+A_h = C$.
A \emph{$B_h[g]$ system} is a family $\calA$ of integer subsets such that $r_{h\calA}(C)\leq g$ for all sets $C\subseteq\Z$.
So a Sidon system is a $B_2[1]$ system.
One can now define $F_{k,g,h}(N)$ as the largest cardinality of a $B_h[g]$ system $\calA \subseteq \binom{[N]}{k}$.
We consider $h=2$, and write $F_{k,g,2}(N)=F_{k,g}(N)$.
We prove the following upper bound.

\begin{thm}\label{thm:gsysupper}
For $k,g \geq 2$, \[F_{k,g}(N) = O_k(\sqrt{g} N^{k-1/2}).\]
\end{thm}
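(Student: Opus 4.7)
The plan is to adapt the proof of Theorem~\ref{thm:upper} by replacing the disjointness of the positive-difference sets (which encoded the Sidon property) with a $B_2[g]$ property for each $\calA(A)$, and then appealing to the classical bound on $B_2[g]$ sets of integers.

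Keeping the notation of Section~\ref{sec:ub}, for every $A\in\binom{[N-1]}{k-1}$ set $A^+=A\cup\{0\}$ and $\calA(A)=\{x\in[N]:x+A^+\in\calA\}$, so that $|\calA|=\sum_{A\in\binom{[N-1]}{k-1}}|\calA(A)|$ by Equation~\eqref{eq:a}. The key claim is that each $\calA(A)$ is a $B_2[g]$ set of integers. Indeed, any identity $x_1+x_2=x_3+x_4$ with $x_i\in\calA(A)$ lifts to
\[
(x_1+A^+)+(x_2+A^+)=(x_3+A^+)+(x_4+A^+),
\]
so each unordered multiset $\{x,y\}\subseteq\calA(A)$ with $x+y=s$ corresponds to the unordered multiset $\{x+A^+,y+A^+\}\subseteq\calA$, all summing to $s+2A^+$. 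Since translates of the fixed shape $A^+$ are determined by their minimum, distinct multisets $\{x_1,x_2\}$ yield distinct multiset representations in $\calA$. The $B_2[g]$ hypothesis on $\calA$ therefore forces at most $g$ multiset representations of $s$ in $\calA(A)$, i.e., $\calA(A)$ is $B_2[g]$.

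A standard counting argument shows that a $B_2[g]$ set $B\subseteq[N]$ satisfies $|B|\le 2\sqrt{gN}$: each sum $s\in[2,2N]$ has at most $g$ unordered multiset representations and hence at most $2g$ ordered ones, so $|B|^2=\sum_s r(s)\le 2g(2N-1)$. Applying this to each $\calA(A)$ (which lies in $[N]$) and summing yields
\[
|\calA|\le \binom{N-1}{k-1}\cdot 2\sqrt{gN}=O_k\!\left(\sqrt{g}\,N^{k-1/2}\right),
\]
as required.

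The only delicate step is the verification that $\calA(A)$ is $B_2[g]$; one has to track unordered multisets carefully and use that the common shape $A^+$ on both sides of the sumset identity preserves distinctness of representations. Once this is in place, the proof reduces to applying a classical single-variable estimate termwise and summing.
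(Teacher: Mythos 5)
Your proof is correct, and it takes a genuinely different route from the paper's. The paper's argument is a single global double count: every sumset $A+B$ with $A,B\in\binom{[N]}{k}$ is a translate of a sumset of two sets in $\binom{[N]}{k}_0$, so there are only $O_k(N^{2k-1})$ possible sumsets, and since a $B_2[g]$ system represents each sumset at most $g$ times, $\binom{|\calA|+1}{2}=\sum_S r_{2\calA}(S)=O_k(gN^{2k-1})$, which simplifies to the claimed bound. You instead recycle the fiber decomposition from the proof of Theorem~\ref{thm:upper}: writing $|\calA|=\sum_{A}|\calA(A)|$ over shapes $A\in\binom{[N-1]}{k-1}$, you show each fiber $\calA(A)$ is itself a $B_2[g]$ set of integers — and your verification is sound, since $\min(x+A^+)=x$ makes $x\mapsto x+A^+$ injective, so distinct multiset representations of $s$ in $\calA(A)$ give distinct multiset representations of the set $s+2A^+$ in $\calA$ — and then you apply the classical one-dimensional estimate $|\calA(A)|\le 2\sqrt{gN}$ termwise and sum. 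Both proofs give $O_k(\sqrt{g}N^{k-1/2})$. The paper's is shorter and uses the hypothesis globally; yours only invokes the $B_2[g]$ property for collisions in which all four summands share the same distance set, a formally weaker hypothesis, and it makes the structural parallel with Theorem~\ref{thm:upper} (the $g=1$ refinement) explicit. It also mirrors the extremal construction of Theorem~\ref{thm:gsyslower}, which is exactly a union of $\Omega_k(N^{k-1})$ fibers each of size $\Theta(\sqrt{gN})$, so your decomposition shows where the two factors in the bound come from.
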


\begin{proof}
There are $O_k(N^{2k-1})$ sumsets of the form $A+B$, with $A,B\in \binom{[N]}{k}$.
Indeed, since any fixed sumset $A+B$ with $A,B\in \binom{[N]}{k}$ is essentially a translation of a sumset of two sets in $\binom{[N]}{k}_0$, there are at most $O_k(N^{2k-1})$ of them.
Now, if $\calA\subseteq \binom{[N]}{k}$ is a $B_2[g]$ system, then \[\binom{|\calA|+1}{2} = \sum_{S\subseteq\Z} r_{2\calA}(S) = O_k(gN^{2k-1}).\]
Simplifying this gives the upper bound.
\end{proof}

\begin{rmk}
This can be made more precise for specific values of $k$.
For instance, there are exactly $N(N-1)^2/2$ sumsets in the case $k=2$, which gives a bound $F_{2,g}(N)\leq \sqrt{g}N^{3/2}$.
\end{rmk}

At first sight this could seem like a rather weak statement, since the results of Sections~\ref{sec:ub} and~\ref{sec:lower} give $F_{k,1}(N)=\Theta_k(N^{k-1})$.
However, we will see that for any $g\geq 2$, $\sqrt{g}N^{k-1/2}$ is indeed the right order for $F_{k,g}(N)$.
More specifically, we get the following lower bound.

\begin{thm}\label{thm:gsyslower}
Let $k \geq 2$, then \[F_{k,g}(N) = \Omega_k(\sqrt{g}N^{k-1/2}).\]
\end{thm}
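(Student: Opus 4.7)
The plan is to build $\calA$ by translating a Sidon system of compressed $k$-sets through a one-dimensional $B_2[g]$ set of integer shifts, mirroring the classical construction of $B_2[g]$ integer sets as unions of translates of a Sidon set.

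Fix $L=\lfloor N/2\rfloor$. By the constructions behind Propositions~\ref{prop:k2}--\ref{prop:k4}, there is a Sidon system $\calB\subseteq\binom{[L]}{k}_0$ with $|\calB|=\Omega_k(L^{k-1})=\Omega_k(N^{k-1})$, every member containing $0$. By the classical construction of $B_2[g]$-sets (a union of $\lfloor g/2\rfloor$ well-spaced translates of a Sidon set of appropriate length), there exists a $B_2[\lfloor g/2\rfloor]$ set $T\subseteq[1,N-L]$ with $|T|=\Omega(\sqrt{gN})$. Set
\[
\calA:=\{B+t:B\in\calB,\ t\in T\}.
\]
Since $0\in B$ and $1\leq t\leq N-L$, each $B+t$ is a $k$-subset of $[N]$; the identity $t=\min(B+t)$ makes $(B,t)\mapsto B+t$ injective, so $|\calA|=|\calB|\cdot|T|=\Omega_k(\sqrt{g}\,N^{k-1/2})$.

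The heart of the plan is verifying that $\calA$ is $B_2[g]$. Given $S=A_1+A_2$ with $A_i=B_i+t_i\in\calA$, the fact that $0\in B_1\cap B_2$ yields $\min S=t_1+t_2$ and $B_1+B_2=S-\min S$ (the latter as translated set), so both $t_1+t_2$ and $B_1+B_2$ are determined by $S$. The $B_2[\lfloor g/2\rfloor]$-property of $T$ leaves at most $\lfloor g/2\rfloor$ unordered pairs $\{t_1,t_2\}$ summing to $\min S$, and the Sidon property of $\calB$ then makes $\{B_1,B_2\}$ unique. Given these two unordered pairs, the only remaining freedom is to match $B$'s with $t$'s, producing at most two unordered pairs $\{A_1,A_2\}=\{B_1+t_1,B_2+t_2\}$ or $\{B_1+t_2,B_2+t_1\}$ (which coincide exactly when $t_1=t_2$ or $B_1=B_2$). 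Altogether $S$ has at most $2\lfloor g/2\rfloor\leq g$ representations in $\calA$, as required.

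The case $g=1$ is not directly covered by this construction (it would require a $B_2[0]$ set of shifts), but it is immediate from Theorem~\ref{thm:lower}: $F_{k,1}(N)\geq F_k(N)=\Omega_k(N^{k-1})\geq\Omega_k(N^{k-1/2})$. The principal technical obstacle in the plan is the factor-of-$2$ loss from the pairing step, which is why $T$ is chosen to be $B_2[\lfloor g/2\rfloor]$ rather than $B_2[g]$; the only other non-trivial input is the classical existence of $B_2[g']$-sets in $[M]$ of size $\Omega(\sqrt{g'M})$, which we invoke as a black box.
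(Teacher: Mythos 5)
Your construction is essentially the paper's own proof: the paper likewise takes $\calA=\{a+I : a\in A,\ I\in\calI\}$ with $A\subseteq[1,N/2]$ a $B_2[\lfloor g/2\rfloor]$ set of size $\Theta(\sqrt{gN})$ and $\calI\subseteq\binom{[N/2]}{k}_0$ a Sidon system of size $\Omega_k(N^{k-1})$, and uses the same splitting into minimal elements plus distance sets together with the factor-of-two pairing count to bound the number of representations by $2\lfloor g/2\rfloor\le g$. One correction: your $g=1$ aside has the inequality reversed, since $\Omega_k(N^{k-1})$ does not give $\Omega_k(N^{k-1/2})$; indeed the stated bound is false for $g=1$ by Theorem~\ref{thm:upper}, and the result is intended (as the paper's surrounding discussion makes explicit) only for $g\geq 2$, which is exactly what your main construction covers.
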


\begin{proof}
Let $A \subseteq \{1,2,\dots,N/2\}$ be a $B_2[\floor{g/2}]$ set such that $|A| = \Theta(\sqrt{gN})$, which is well known to exist (c.f.~\cite{OBryant04}) and let $\calI\subseteq \binom{[N/2]}{k}_0$ be a Sidon system.
We can use the constructions from Section~\ref{sec:lower} and see that $|\calI| = \Omega_k(N^{k-1})$.
We will show that the family \[\calA = \{a+I : a\in A,\, I\in\calI\}\] is a $B_2[g]$ system.
Suppose \[(a+I)+(b+J) = (c+L)+(d+M),\] then by the arguments stated in Section~\ref{sec:not}, we must have \[a+b=c+d \quad\text{and}\quad I+J=L+M.\]
Since $A$ is a $B_2[\floor{g/2}]$ set, $a+b$ has at most $\floor{g/2}$ representations, and since $\calI$ is a Sidon system, the latter implies $\{I,J\}=\{L,M\}$.
Hence there are at most $g$ representations for this sumset.
Since we clearly have $|\calA| = |A||\calI| = \Omega_k(\sqrt{g}N^{k-1/2})$, this completes the proof.
\end{proof}

\subsection{Results for $h$-fold sumsets}

Another aspect to consider is to generalize Theorems~\ref{thm:upper}, \ref{thm:lower} and~\ref{thm:randomsidon} to the case of $h$-fold sumsets for some arbitrary fixed $h\geq 2$.
We are able to prove the following generalization of the $0$-statement of Theorem~\ref{thm:randomsidon}.

\begin{prop}\label{prop:randombh1}
Let $N,k,h$ be integers, $k\geq 2$ and $0\leq p\leq 1$.
Let $\calA \in \calS(N,k,p)$ be a random system and define \[p_0(N,k,h) = p_0 = N^{-\frac{hk+1}{h+2}}.\]
If $p=\omega(p_0)$, then $\calA$ is asymptotically almost surely not a $B_h[1]$ system.
\end{prop}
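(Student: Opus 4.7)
The plan is to extend the Janson-inequality argument of Proposition~\ref{prop:randomsidon_0} by \emph{inflating} every Sidon-type quadruple in $\calC'$ with $h-2$ arbitrary additional $k$-sets. Concretely, for every $(C_1,D_1,C_2,D_2)\in\calC'$ and every choice of $h-2$ pairwise distinct $k$-sets $A_1,\dots,A_{h-2}$ also distinct from $C_1,D_1,C_2,D_2$, the identity
\[
A_1+\cdots+A_{h-2}+C_1+D_1=A_1+\cdots+A_{h-2}+C_2+D_2
\]
is a genuine $B_h[1]$ violation provided all $h+2$ sets lie in $\calA$. I would therefore define $\calC'_h$ to be the family of all such ordered $(h+2)$-tuples $T=(A_1,\dots,A_{h-2};C_1,D_1,C_2,D_2)$ and set $Y=\sum_{T\in\calC'_h}I_T$, where $I_T$ indicates that every set in $T$ belongs to $\calA$; then $\{Y>0\}$ forces $\calA$ to fail the $B_h[1]$ property.

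Counting is immediate: Lemma~\ref{lem:C4lower} yields $\Omega_k(N^{2k+1})$ choices for the core, and the padding contributes a further $\Omega_k(N^{k(h-2)})$ tuples (excluding the $O_k(1)$ forbidden core sets is absorbed), giving $|\calC'_h|=\Omega_k(N^{hk+1})$ and hence $\mu:=\Exp(Y)=\Omega_k(p^{h+2}N^{hk+1})$, which diverges exactly at the threshold $p=\omega(p_0)$. To convert $\mu\to\infty$ into $\Pr(Y=0)=o(1)$ via Janson's inequality, I would control
\[
\Delta=\sum_{(T,T')}\Exp(I_TI_{T'}),
\]
the sum running over ordered pairs of distinct elements of $\calC'_h$ that share at least one $k$-set, by organising the pairs according to the number $s\geq 1$ of shared sets and, within each $s$, by how many of the shared sets lie in the padding versus the core of $T$ and of $T'$. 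When all shared sets are padding, fixing $T$ costs $|\calC'_h|$ and each extra shared set of $T'$ saves a factor of $N^k$ in the count of extensions, producing a contribution $O_k(\mu^2/(N^kp)^s)$. For shared core sets the rank-based counting of Lemma~\ref{lem:DLupper} still leaves only $O_k(N^{k+1})$ completions of a $\calC'$-core after fixing one element, so the same $(N^kp)^{-1}$ gain per shared set persists. Under $p=\omega(p_0)$ one has $pN^k=\omega(N^{(2k-1)/(h+2)})\to\infty$, hence every term is $o(\mu^2)$, $\Delta=o(\mu^2)$, and Janson concludes $\Pr(Y=0)\leq\exp(-\Omega(\mu^2/\Delta))=o(1)$.

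The main obstacle I anticipate is the clean bookkeeping for mixed overlaps, in which a $k$-set serves as a core element in one configuration and a padding element in the other: there one must carefully attribute the Sidon constraint without double-counting it against the padding degrees of freedom. A lesser issue is that a single $B_h[1]$ violation may be represented by several ordered tuples of $\calC'_h$, but this multiplicity is a constant depending only on $k$ and $h$, affects $\mu$ and $\Delta$ symmetrically, and hence does not change the conclusion.
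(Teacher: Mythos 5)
Your skeleton is essentially the paper's: the inflated family $\calC'_h$ (a $\calC'$-core plus $h-2$ common padding sets) coincides, up to inessential side conditions, with the family $G$ used in the paper, and your first-moment count $\mu=\Theta_k(p^{h+2}N^{hk+1})$ is correct and gives the right threshold. The gap is in the $\Delta$ estimate. Your claim that every shared set buys a factor $(N^kp)^{-1}$ fails for the overlap in which the two shared sets form a \emph{translate pair}, namely the pair $\{a_1+A_1',\,b_1+A_1'\}$ sitting inside the core of $T$ and reused as the equal-distance-set pair of the core of $T'$. Given such a pair, completing $T'$ costs $O_k(N^{k-1})$ for the second core distance set, $O(N)$ for its two minima (their difference is forced by $c_1+c_2=d_1+d_2$), and $N^{k(h-2)}$ for the padding: $\Theta_k(N^{k(h-1)})$ extensions, not the $N^{hk+1-2k}$ your per-set heuristic predicts. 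The resulting contribution to $\Delta$ is $\Theta_k\bigl(|\calC'_h|\,N^{k(h-1)}p^{2h+2}\bigr)=\Theta_k\bigl(\mu^2/(N^{k+1}p^2)\bigr)$, so $\Delta=o(\mu^2)$ forces $p=\omega(N^{-(k+1)/2})$. Whenever $(h-2)(k-1)\geq 3$ (e.g.\ $h=4,k=3$ or $h=5,k=2$) this is strictly stronger than $p=\omega(p_0)$, so just above the threshold your Janson bound is vacuous; monotonicity cannot repair it, because the failure is at the bottom of the range, which is exactly where the statement must be proved.

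Moreover the failure is not a bookkeeping artefact that finer case analysis would remove. Every tuple of $\calC'_h$ contains two sets that are translates of one another (this is built into $\calC'$), and the expected number of translate pairs inside $\calA$ is $O(N^{k+1}p^2)$. Hence in the regime $p_0\ll p\ll N^{-(k+1)/2}$, nonempty precisely when $(h-2)(k-1)\geq 3$, with high probability $\calA$ contains no translate pair at all, so your $Y$ equals $0$ whp and \emph{no} second-moment or Janson argument over $\calC'_h$ can deliver the $0$-statement there; one would need violating configurations without a translate pair (sumset addition is not cancellative, so $A_1+\dots+A_{h-2}+C_1+D_1=A_1+\dots+A_{h-2}+C_2+D_2$ does not require $C_1+D_1=C_2+D_2$), or a genuinely different argument. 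You should also be aware that the paper's own proof passes through the same configuration: its Claim bounds the number of undetermined distance sets of $(C,D)$ by $\lfloor (m-h-2)/2\rfloor$, which the overlap above (with $m=2h+2$ and $h-1$ undetermined distance sets) appears to violate, so the printed argument does not supply the estimate you are missing either; the issue is real for $h\geq 3$, not an artifact of your reformulation.
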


\begin{proof} The proof is an adaptation of the one in Theorem \ref{thm:randomsidon} for the $0$--statement in the case $h=2$. As in that proof it suffices to exhibit the occurrence of a particular class of $h$--tuples violating the Sidon property.

We recall that, with the above notation, the equation
$$
A_1+A_2+\cdots+A_h=B_1+B_2+\cdots+B_h
$$
violating the Sidon condition is equivalent to
$$
A'_1+\cdots +A'_h=B'_1+\cdots +B'_h\; \text{and} \; a_1+\cdots +a_h=b_1+\cdots +b_h.
$$
Let $Z$ be a random system of $k$--sets. Consider the set
$$
F=\{A=(A_1,\ldots ,A_h): A_i\in Z, A_i\preceq A_{i+1}, i=1,\ldots ,h-1\},
$$
of ordered $h$--tuples of sets in $Z$. Denote by $G$ the family of ordered pairs of distinct $h$--tuples $(A,B)\in F\times F$ satisfying the following properties:

\begin{enumerate}
\item[(i)] $A_i=a_i+A'_i$ and $B_i=b_i+A'_i$, $i=1,\ldots h$.
\item[(ii)] $a_i=b_i$ for all but two subscripts in $[h]$, the $a_i$'s are pairwise distinct and the $b_i$'s are pairwise distinct, and
\item[(iii)] $\sum_{i}a_i=\sum_{i}b_i$.
\end{enumerate}
Thus $G$ consists of pairs violating the Sidon sets with $h+2$ sets in total. 

Let $Y$ be the random variable counting the number of pairs in $G$. The Proposition will be proved if we show that 
$$
\lim_{N\to\infty} \Pr (Y=0)=0.
$$
There are $\Omega (N^{h(k-1)})$ choices for the $A'_1,\ldots ,A'_h$ and $\Omega(N^{h+1})$ for distinct integers  $a_1,a_2,\ldots ,a_h,b_1,b_2$ satisfying
$$
a_1+a_2+a_3+\cdots +a_h=b_1+b_2+a_3+\cdots +a_h.
$$
It follows that  
$$
|G| = \Omega(N^{hk+1}).
$$
By the Janson inequality,
\begin{equation}
\label{eq:jansonineq}
\Pr (Y=0) \leq \prod_{(A,B)\in G} \Pr (I_{A,B}=0) \cdot \exp(\Delta) = (1-p^{h+2})^{|G|}\exp(\Delta),
\end{equation}
where, by denoting by $K$ the subset  consisting of distinct pairs $((A,B),(C,D))\in G\times G$ such that $\{A_1,\dots,A_h,B_1,\dots,B_h\}\cap \{C_1,\dots,C_h,D_1,\dots,D_h\}\neq \emptyset$ and by $K(m)$ the pairs of $K$ which have $m$ subsets in total, 
\begin{equation}\label{eq:jansondelta} 
\begin{split}
\Delta &= \sum_{((A,B),(C,D))\in K} \Pr (I_{A,B}I_{C,D}=1)\\
&= \sum_{m=h+2}^{2h+3} \sum_{((A,B),(C,D))\in K(m)} \Pr(I_{A,B}I_{C,D}=1)\\
&= \sum_{m=h+2}^{2h+3} |K(m)|p^m.
\end{split}
\end{equation}

We will next upper bound the cardinalities of the sets $K(m)$.

\begin{claim}
\label{lem:kupper}
We have \[|K(m)| = \begin{cases} O_{h,k}(N^{\floor{(m-h)/2}(k-1)-k+m-h+hk-1}) &\text{if } h+4 \leq m \leq 2h+3, \\ 0 & \text{otherwise}.  \end{cases}\]
\end{claim}

\begin{proof} By the definition of $G$ we cannot have distinct pairs $(A,B), (C,D)\in G$ having at most $h+3$ sets in total, which shows that $K(h+2)=K(h+3)=\emptyset$. 

Suppose that $h+4\le m\le 2h+3$.
For a fixed $(A,B) \in G$, we first note that only $m-h-2$ of the sets defined by a potential $(C,D)$ can still be chosen freely.
So the same is true for the minimal elements $c_i,d_i$.
Furthermore, we also have the equation $\sum c_i = \sum d_i$, which results in another non-redundant restriction, and hence there are at most $m-h-3$ choices for the minimal elements of the pair $(C,D)$.
We now consider the distance sets.
By the definition of $G$, there are exactly $h$ pairwise distinct distance sets $C_1',\dots,C_h'$.
Since at most $m-h-2$ of the sets defined by $(C,D)$ are not determined by $(A,B)$, we thus have at most \[\floor*{\frac{m-h-2}{2}} = \floor*{\frac{m-h}{2}}-1\] undetermined distance sets.
There are at most $O(N^{(\floor{(m-h)/2}-1)(k-1)})$ of such sumsets, and so 
\begin{align*}
K(m) &= |G|\cdot O(N^{(\floor{(m-h)/2}-1)(k-1)}) \cdot O(N^{m-h-3})\\
&= O(N^{\floor{(m-h)/2}(k-1)-k+m-h+hk-1}).
\end{align*}
\end{proof}

It follows from the above claim that, if \[p = o(N^{-\frac{\floor{(m-h)/2}(k-1)-k+m-h+hk-1}{m}}),\] then 
\begin{equation}
\label{eq:Kvanishing}
|K(m)|p^m = o(1), \; h+2\le m\le 2h+3.
\end{equation}
We note that, for  $k,h\geq 2$ and $h+4\leq m \leq 2h+3$, we have
$$
\frac{(m-h)(k-1)/2-k+m-h+hk-1}{m} < \frac{hk+1}{h+2}.
$$
Hence, for any any $p$ such that \[p=o(N^{-\frac{\floor{(m-h)/2}(k-1)-k+m-h+hk-1}{m}}) \quad \text{and} \quad p=\omega(N^{-\frac{hk+1}{h+2}}),\] we see that \eqref{eq:Kvanishing} holds, and furthermore when looking at \eqref{eq:jansonineq} we get
\begin{align*}
\Pr(Y=0)
&\leq (1-p^{h+2})^{|G|}\exp(\Delta)\\
&\leq \exp(-|G|p^{h+2}+\Delta)\\
&= \exp(-\omega(1)+o(1)),
\end{align*} 
and so the family is asymptotically almost surely not a $B_h(1)$ system for $p$ in this range.
But this property is clearly monotone in $p$, and hence we get the $0$-statement for all $p=\omega(N^{-\frac{hk+1}{h+2}})$.
\end{proof}

Recall that the threshold in Theorem~\ref{thm:randomsidon} essentially resulted from the fact that the most important case was that of four pairwise distinct sets $A,B,C,D$ such that \[A+B=C+D.\]
Especially in light of the results obtained by Godbole et al. in~\cite{GJLR99}, the initial assumption might be that this corresponds to the case $2h$ for general $h$, which would lead to a threshold at \[p_1 = N^{-\frac{h(k+1)-1}{2h}}.\]
Proposition~\ref{prop:randombh1} in fact shows that the case of $h+2$ pairwise distinct sets is more important, since one can easily check that $p_0 \leq p_1$, with equality only if $h=2$.
Further evidence that the $p_0$ in Proposition~\ref{prop:randombh1} might be the correct threshold is given by the fact that it is easy to check that the $1$-statement holds in the special case of $k=2$ for any $h\leq 4$.
Note that inserting $k=1$ into the threshold $p_0$ gives something strictly weaker than the result by Godbole et al. for all $h>2$, and so the consistency in the $h=2$ case seems to be more of a coincidence.

\appendix
\section{Nontrivial Sumset Equalities for $3$-Sets}\label{sec:3solutions}

In this section we will show that in the case of $3$-sets, there are only few nontrivial sumset equalities.
Moreover, from the proof of the following theorem, it will be possible to extract the specific solutions mentioned in the proof of Proposition~\ref{prop:k3}.

\begin{thm}\label{thm:3solutions}
\[\left|\{(X,Y,V,W)\in \binom{[N]}{3}_0^4 : X+Y=V+W \text{ and } \{X,Y\}\neq\{V,W\}\}\right| = O(N).\]
\end{thm}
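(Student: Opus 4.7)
The plan is to classify nontrivial equalities $X+Y=V+W$ with $X,Y,V,W\in\binom{[N]}{3}_0$ and $\{X,Y\}\neq\{V,W\}$ according to the size of the common sumset $S=X+Y$. Writing $X=\{0,x_1,x_2\}$ and $Y=\{0,y_1,y_2\}$ with $x_1<x_2$ and $y_1<y_2$, the sumset $S$ is a subset of the nine values $\{0,x_1,x_2,y_1,y_2,x_1+y_1,x_1+y_2,x_2+y_1,x_2+y_2\}$, so $|S|\in\{5,6,7,8,9\}$ (the minimum of $5$ is forced since $0,x_1,x_2,x_1+x_2,2x_2$ are always distinct when $X$ contains $0<x_1<x_2$).

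The extreme cases are disposed of quickly, using arguments already present in the proof of Proposition~\ref{prop:k3}. If $|S|=9$ there are no collisions among the nine sums, and the recovery procedure (extracting $\min(S\setminus\{0\})$, $\max(S)$, the total sum of $S$, and the second- and third-largest elements) reconstructs $\{X,Y\}$ uniquely, so this size contributes no nontrivial quadruple. If $|S|=5$, the maximal number of collisions is achieved only when $X=Y$ is an arithmetic progression $\{0,d,2d\}$, and then $V=W=\{0,d',2d'\}$ for the same reason; comparing the two sumsets forces $d=d'$, so $X=Y=V=W$ and again nothing is contributed.

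The heart of the proof is the intermediate range $6\le|S|\le 8$. Here I would enumerate, up to dilation, all collision patterns among the nine sums that are compatible with the orderings $0<x_1<x_2$ and $0<y_1<y_2$. Each collision is a linear relation on $(x_1,x_2,y_1,y_2)$, so fixing a consistent family of collisions reduces $(X,Y)$ to a one-parameter (dilation) family of primitive pairs, and a further combinatorial check isolates those admitting a second decomposition $V+W$. This case analysis, carried out in the appendix, produces exactly the ten primitive equations tabulated in the proof of Proposition~\ref{prop:k3}.

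The count then follows immediately by scale invariance: each primitive equation has a largest element $M=O(1)$, so the dilations of it that fit inside $\binom{[N]}{3}_0$ form a family of size $\lfloor N/M\rfloor=O(N)$, and each such equation yields at most eight ordered quadruples (two orderings inside $\{X,Y\}$, two inside $\{V,W\}$, and the swap of the two pairs). Summing over the ten primitive equations gives the claimed bound $O(N)$. The main obstacle is the enumeration of collision patterns in the intermediate range: it is conceptually routine but requires careful bookkeeping to verify that no additional primitive equations are overlooked and that the listed ten form a complete set of representatives up to dilation.
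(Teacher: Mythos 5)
Your treatment of the extreme cases is fine: for $|S|=9$ the reconstruction argument from the proof of Proposition~\ref{prop:k3} applies to both decompositions and forces $\{X,Y\}=\{V,W\}$, and for $|S|=5$ the Freiman-type lower bound forces $X=Y$ and $V=W$ to be dilations of $\{0,1,2\}$ with the same dilation factor. The final counting step (finitely many primitive solutions, each giving an $O(N)$ family of dilations, times a bounded number of orderings) is also exactly how the paper concludes. The problem is the middle: for $6\le|S|\le 8$ you do not prove anything — you assert that an enumeration of collision patterns "carried out in the appendix" yields precisely the ten primitive equations, which is circular, since that list \emph{is} the content of Theorem~\ref{thm:3solutions}, and producing it is the several-page case analysis that constitutes essentially the whole proof (the paper organizes it by lexicographic orderings of $X,Y,V,W$ and by the dual sets $\widebar{X}=x'-X$, rather than by $|S|$, but in either organization it is the heart of the matter and cannot be waved through).

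Moreover, the one structural claim you offer to justify that the enumeration terminates in one-parameter families is wrong as stated. A collision pattern \emph{within} the single sumset $X+Y$ imposes only $9-|S|$ linear relations on $(x_1,x_2,y_1,y_2)$: for $|S|=8$ that is one relation, leaving a three-parameter family of pairs $(X,Y)$ with that pattern, and for $|S|=7$ a two-parameter family — not a dilation class. (Indeed, the first entry of the paper's list, $\{0,1,2\}+\{0,2,5\}$, has $|S|=8$, and pairs with that same internal collision pattern form a three-parameter family; what is rigid is the \emph{pair of decompositions}.) The reduction to one-parameter families can only come from the joint system of equalities matching each element of $V+W$ with an element of $X+Y$, and one must verify case by case that in every surviving configuration this system has corank one, including the degenerate configurations where the two decompositions share a set (the $X=V\prec Y\prec W$ case, i.e.\ equalities of type $A+B=A+C$), which your collision-pattern framing does not obviously cover. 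Until that matching analysis is actually carried out, the proposal reduces the theorem to itself.
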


\begin{proof}
For any set $X=\{0<x<x'\}\in \binom{[N]}{k}_0$, let $\widebar{X}$ denote the set \[\widebar{X} = x'-X = \{0<x'-x<x'\}.\]
We will refer to $\widebar{X}$ as the \emph{dual} of $X$.
Let $X=\{0<x<x'\},Y,V,W\in\binom{[N]}{k}_0$ such that $X+Y=V+W$.
Without loss of generality, we can assume that $X\preceq Y$, $X\preceq V$, and $V\preceq W$.
Furthermore, $X=V$ only if $Y\prec W$.
We begin by stating all orderings satisfying the above relations, but not having any solutions.
They are
\begin{gather*}
X=Y\prec V=W,\quad X=Y=V\prec W,\quad X\prec Y=V=W,\\
X=Y\prec V\prec W,\quad X\prec Y\prec V=W,\quad X\prec V=W\prec Y,\\
X\prec V=Y\prec W,\quad X\prec V\prec Y=W,\quad X\prec Y\prec V\prec W.    
\end{gather*}
We will quickly go through all of them and see that they are not possible.

\hfill

$\bm{X=Y\prec V=W.}$
This implies $x=v$ as they are the smallest element on either side, and also $2x'=2v'$ as the largest element on either side, which implies $x'=v'$, and hence $X=Y=V=W$, a contradiction.

\hfill

$\bm{X=Y=V\prec W.}$
This implies $2x'=x'+w'$ and hence $x'=w'$.
Then we must have $w>x$, but now $x'+w$ is strictly larger than $x'+x$ and hence cannot be matched by any element of $X+X$, a contradiction.
Note that $w>x$ also implies $x'-x>w'-w$, that is, $\widebar{W}\prec \widebar{X}$, and so the case $X\prec Y=V=W$ is also not possible.

\hfill

$\bm{X=Y\prec V\prec W.}$
We have $v \leq w$, and hence $x=v$ as the smallest element on either side.
This implies $x'<v'$ which in turn means $x'>w'$ since $2x'=v'+w'$.
But then $v'+w'>v'+w>v'>w'>w\geq v>0$ is a chain in $V+W$ of at least $6$ elements, while $|X+X|\leq 6$.
Hence we must have $v=w$ which implies $w'>v'$, a contradiction.

\hfill 

$\bm{X\prec Y\prec V=W.}$
We have $x\leq y\leq v$ and hence $x=y=v$ because the smallest elements coincide.
But then $x'<y'<v'$ which contradicts $x'+y'=2v'$.

\hfill

$\bm{X\prec V=W\prec Y.}$
We have $x\leq v\leq y$ and hence $x=v\leq y$, which implies $x'<v'$, and hence $y'>v'>x'$.
But then $x+y'$ is strictly larger than $v+v'$ and hence cannot be matched by any element of $V+V$.

\hfill

$\bm{X\prec V=Y\prec W.}$
Since $x'+y'=y'+w'$ we know that $x'=w'$.
Furthermore, $x\leq y\leq w$, and hence $x=y$ as smallest elements.
This implies $x'<y'$, which in turn means $w'<y'$, and hence $w>y$.
But then $y'+w$ is strictly larger than $y'+x$ and $x'+y$ and hence cannot be matched by any element of $X+Y$.

\hfill

$\bm{X\prec V\prec Y=W.}$
We have $x'+y'=v'+y'$ and hence $x'=v'$.
Furthermore $x\leq v\leq y$, which implies $x=v$ and hence $x'<v'$, a contradiction.

\hfill

$\bm{X\prec Y\prec V\prec W.}$
Since $x\leq y\leq v\leq w$, we have $x=y=v\leq w$ as smallest elements, which implies $x'<y'<v'$.
But now $v'+w$ will be strictly larger than both $x'+y$ and $y'+x$ and hence cannot be matched by any element of $X+Y$.

\hfill

This settles the cases without solutions.
The remaining three cases are \[X=V\prec Y\prec W, \quad X\prec V\prec Y\prec W, \quad X\prec V\prec W\prec Y,\] each of which will actually have solutions, and the analysis is more involved than in the previous cases.

\hfill

$\bm{X=V\prec Y\prec W.}$
Before splitting this case into further subcases, we make some general assertions.
Since $x'+y'=x'+w'$, we know that $y'=w'$, and hence $w>y$ (in particular, $w>x$).
This means that $x'+w>x'+y$, and hence $x'+w\in\{y',x+y'\}$.
Since $w>x$, this implies in particular that $y'>x'$.
It also tells us that $x+y'>x'+y$.
Finally, with the same arguments we also see that $x+w\in\{x',y',x'+y\}$ and $w\in\{x',x+y,x'+y\}$.
Let us now consider specific subcases.

\textbf{Case a) $\bm{x'+w=y'}$.}
This implies that $y'>x'+y$.
We get the following diagrams that represent the sumsets. 
In this and all following diagrams, we will always omit the unique greatest element $x'+y'$, as well as the unique least element $0$.
\begin{tikzccd}
x+y'\edge{r} & y'\edge{r} & x'+y\edge{r}\edge{rd} & x'\edge{rr} & & x\edge{dashed,ld} \\
& & & x+y\edge{r} & y
\end{tikzccd}
\begin{tikzccd}
x+y' \edge{r} & y'\edge{r}\edge{rd} & x+w\edge{r} & w\edge{r} & x\edge{lld}\\
& & x'
\end{tikzccd}
We see that there is at least one element strictly between $y'$ and $x'$ in the top diagram, and so it must hold that $x+w>x'$, in particular $x+w=x'+y$, so in particular $x'+y>w$, and hence $w\in\{x',x+y\}$.
We also see that $w=x'$ if and only if $x=y$.

\textbf{Case a)i. $\bm{w=x'}$.} 
Then $x=y$ and we have a chain of exactly $7$ elements in the bottom diagram.
This implies that $x'=2x$ and we get the following solution: $x$ can be freely chosen from $[1,(N-1)/4]$, $y=v=x$, $x'=v'=w=2x$, and $y'=w'=4x$.

\textbf{Case a)ii. $\bm{w=x+y\neq x'}$.} 
In particular, we have $y>x$.
We see that $x'+y=x+w=2x+y$ and hence $x'=2x$.
Also, since $w>y>x$, we must have that $x'=y$, otherwise it cannot be matched.
This leads to the solution $v=x$, $x'=y=v'=2x$, $w=3x$, and $y'=w'=5x$, where $x$ can be chosen freely from the positive integers smaller than $(N-1)/5$.

\textbf{Case b) $\bm{x'+w=x+y'}$.}
This does not give us enough new information, so we go directly into three further subcases, depending on the assignment of $x+w$.
Recall that $x+w\in\{x',y',x'+y\}$, so there are three different cases to consider.

\textbf{Case b)i. $\bm{x+w=x'}$.} 
This means that $x'$ is strictly larger than $x+y$, but strictly smaller than $y'$.
We get the following diagrams. 
\begin{tikzccd}
x+y'\edge{r}\edge{rd} & x'+y\edge{r} & x'\edge{r}\edge{ld} & x+y\edge{r} & y\edge{dashed,r} & x\\
& y'
\end{tikzccd}
\begin{tikzccd}
x+y'\edge{r} & y'\edge{r} & x'\edge{r} & w\edge{r} & x
\end{tikzccd}
This directly implies that we must have $y=x$, since there is only one element strictly between $x'$ and $x$ in the bottom diagram.
We thus have $w=x+y=2x$ and $x'+y=y'$.
Solving this, we get the solution $y=v=x$, $w=2x$, $x'=v'=3x$, and $y'=w'=4x$, where $x$ can be chosen freely among the positive integers smaller than $(N-1)/4$.

\textbf{Case b)ii. $\bm{x+w=y'}$.} 
In particular, this means that $y'>x+y$.
We get the following diagrams, 
\begin{tikzccd}
x+y'\edge{r}\edge{rd} & x'+y\edge{r}\edge{rd} & x+y\edge{r}\edge{ld} & y\edge{dashed,r} & x\edge{lld}\\
& y'\edge{r} & x'
\end{tikzccd}
\begin{tikzccd}
x+y'\edge{r} & y'\edge{r}\edge{rd} & w\edge{r} & x\edge{ld}\\
& & x'
\end{tikzccd}
We see that there is no element strictly between $y'$ and $x+y'$, and hence $y'\geq x'+y$.

Suppose $y'>x'+y$.
This implies that there is an element strictly between $x'$ and $y'$, and hence we must have $w=x'+y>x'$.
So the bottom diagram will now be a chain of $7$ elements, and so we must have $y=x$ and $x+y=x'$.
This simplifies to the solution $y=v=x$, $x'=v'=2x$, $w=3x$, and $y'=w'=4x$, where $x$ can be chosen freely among the positive integers smaller than $(N-1)/4$.

Suppose now that $y'=x'+y=x+w$.
Since $y\geq x$, this implies that $w\geq x'$ with equality if and only if $x=y$.
So we have two cases.

If $w=x'$ and $x=y$, then we must have $x'=x+y=2x$, and we get the solution $y=v=x$, $x'=v'=w=2x$, and $y'=w'=3x$, where $x$ can be chosen among the positive integers smaller than $(N-1)/3$.

If $w>x'$ and $y>x$, then this implies that $x+y=w$ and $y=x'$, which leads to the solution $v=x$, $x'=y=v'=2x$, $w=3x$, and $y'=w'=4x$, with $x$ able to be chosen freely from the positive integers smaller than $(N-1)/4$.

\textbf{Case b)iii. $\bm{x+w=x'+y\neq y'}$.}
Since $y\geq x$ we have $w\geq x'$ and $y=x$ if and only if $w=x'$. 
So since $x'+y\neq y'$, the bottom side of the diagram is now a chain of $7$ or $8$ elements (we don't know whether $w=x'$).
Specifically we get the following diagrams. 
\begin{tikzccd}
x+y'\edge{r}\edge{rd} & x'+y\edge{r}\edge{rd} & x+y\edge{r} & y\edge{dashed,r} & x\edge{lld}\\
& y'\edge{r} & x'
\end{tikzccd}
\begin{tikzccd}
x+y'\edge{r}\edge{rd} & x'+y\edge{r} & w\edge{ld}\edge{dashed,r} & x'\edge{r} & x\\
& y'
\end{tikzccd}
Now note that the cases $(x=y \wedge x+y=x')$ and $x<y$ lead to $x'=2x$:
The first is trivial, for the second note that $x<y$ implies $y=x'$ and $w=x+y$, and hence $2x+y=x+w=x'+y$.
But then $y' = x'+w-x = w+x$, a contradiction.
So we must have $x=y$ and $x+y=y'=2x$, which implies $w=x'$.
We get the solution $y=v=x$, $x'=v'=w=3x/2$, and $y'=w'=2x$, where $x$ can be chosen freely among positive even integers smaller than $(N-1)/2$.

\hfill

$\bm{X\prec V\prec Y\prec W.}$
We start by observing some general relations.
Since $x\leq v\leq y\leq w$, we must have $x=v\leq y\leq w$ as the smallest elements on each side.
This implies $x'<v'$ and since $x'+y'=v'+w'$ we thus have $y'>w'$.
But then we must have $w>y$, so in particular $w>x$.
Then $v'+w$ is strictly larger than $x'+y$ and hence $v'+w\in\{y',x+y'\}$.
Since $w>x$, this implies that $y'>v'$, and hence $w'>x'$.
But then $x+w'<x+y'$, and hence we must have $v'+w=x+y'$, since otherwise $x+y'$ could not be matched.

This already determines the ordering of the duals.
First, note that \[x'-x = x'-(v'+w-y') = (v'+w'-y')-(v'+w-y') = w'-w,\] and since $w'>x'$, this implies $\widebar{X}\prec \widebar{W}$.
Furthermore \[w'-w = w'-(x+y'-v') = (w'-y') + (v'-v),\] and since $w'<y'$, this implies $v'-v>w'-w$ and hence $\widebar{W}\prec \widebar{V}$.
Finally, \[v'-v = y'+x-w-v = y'-w < y'-y,\] and so $\widebar{V}\prec \widebar{Y}$.
So by identifying $X$ and $Y$ with their own respective duals, $\widebar{W}$ with $V$, $\widebar{V}$ with $W$, this corresponds to a case of the form $X\prec V\prec W\prec Y$, and so it suffices to check these.

\hfill

$\bm{X\prec V\prec W\prec Y.}$
We will again first state some universally true things and then make case distinctions.
Since $x\leq v\leq w\leq y$ we must actually have $x=v\leq w\leq y$ since $x$ and $v$ are the smallest element in their respective sumset.
This implies $x'<v'$ and hence $y'>w'$.
So $x+y'>x+w'$ and hence we know that $x+y'\in\{v',v'+w\}$.

But suppose that we have $x+y'=v'$.
Then $v'>y'$ and hence we must have $w>x$ and $x'>w'$.
At the same time, there has to be an element strictly between $x'+y'$ and $x+y'$ in $X+Y$, and so in particular we must have $x'+y>y'+x$ and $x'+y=v'+w$, which implies $y>w$.
But then $x<w<\min(x',y)$ and hence it cannot be matched by any element in $X+Y$.

So we must have $x+y'=v'+w$, and so $y'\geq v'$ and hence $w'\geq x'$, in particular $y'>x'$.
Furthermore, $v'+w>x+w'$ and $x+y'\geq x'+y$ since we know that $x+y'$ is the second largest element in $V+W$.
Finally, we see that $y'\in\{v',x+w,x+w'\}$.
But in fact, we cannot have $y'=v'$, since this would imply $x'=w'$ and hence $w>x$.
But at the same time, we have $y'+x=v'+w=y'+w$ which implies $x=w$.
So $y'\neq v'$ and hence in fact $y'>v'$, which also implies $w'>x'$ and $w>x$ because of $y'+x=v'+w$.
In particular, $y>x$.
We have to consider two different cases for the assignment of $y'$.

\textbf{Case a) $\bm{y'=x+w \neq v'}$.}
This tells us in particular that $y'\leq x+y<x'+y$ and since $v'+w=y'+x=2x+w$ we have $v'=2x$.
We get the following diagrams.
\begin{tikzccd}
x+y'\edge{dashed,r} & x'+y\edge{r} & x+y\edge{dashed,r} & y'\edge{r}\edge{rd} & x'\edge{r} & x\edge{ld}\\
&&&& y
\end{tikzccd}
\begin{tikzccd}
x+y'\edge{r} & x+w'\edge{r} & y'\edge{r}\edge{rd} & v'\edge{rr} & & x\edge{ld} \\
&&& w'\edge{r} & w
\end{tikzccd}
From the bottom diagram, we infer that there is only one element strictly between $x+y'$ and $y'$, so exactly one of the inequalities in the top diagram has to be an equality.
Furthermore, since $w'$ is strictly between $y'$ and $x'$, we must have $w'=y$ and in particular $y'>y>x'$.
This also implies $w=x'$ and $x+w'=x+y$, so we must have $x+y>y'$ and $x+y'=x'+y$.
Finally, $x'+y=x+y'=v'+w=v'+x'$ implies $v'=y$.
This leads to the solution $v=x$, $x'=w=3x/2$, $y=v'=w'=2x$, and $y'=5x/2$ for any positive even integer $x$ smaller than $2(N-1)/5$.

\textbf{Case b) $\bm{y'=x+w'}$.}
This tells us in particular that $x+w'>v'$.
Furthermore, we have $x'+x+w'=x'+y'=v'+w'$ and hence $v'=x'+x$.
This implies $x'+x+w=v'+w=x+y'=2x+w'$, and hence $x'+y\geq x'+w=x+w'=y'$.
Since $y'$ is the third largest element in $V+W$, this means that $x'+y\in\{x'+y,y'\}$.

\textbf{Case b)i. $\bm{x'+y=y'}$.}
So $x'+y=x'+w$ and hence $w=y$.
Since $w'$ is strictly larger than $\max\{x',y\}$ but smaller than $y'$, we must have $w'=x+y$.
Now we must have $y=x'$, since otherwise $x'$ could not be matched, and hence $v'=w'$.
This leads to the solution $v=x$, $x'=y=w=2x$, $v'=w'=3x$, and $y'=4x$ for a positive integer $x$ smaller than $(N-1)/4$.

\textbf{Case b)ii. $\bm{x'+y=x+y'}$.}
We see that $x'+y=v'+w$ and hence $y>w$.
Furthermore, we have that $y'<x'+y$, and since $y'$ is the third largest element in $V+W$, we must have $y'\geq x+y$.

\textbf{Case b)ii.$\bm{\alpha}$ $\bm{y'=x+y}$.}
Since $v'$ and $w'$ are strictly between $y'$ and $x'$, we must have $v'=w'=y$ and in particular $y>x'$, which also implies $w=x'$.
Hence $x+w$ is strictly between $x'$ and $y'$, that is $x+w=y$.
This leads to the solution $v=x$, $x'=w=2x$, $y=v'=w'=3x$, and $y'=4x$, for any positive integer $x$ smaller than $(N-1)/4$.

\textbf{Case b)ii.$\bm{\beta}$ $\bm{y'=x+y}$.}
We get following diagrams. 
\begin{tikzccd}
x+y'\edge{r} & y'\edge{r}\edge{rd} & x'\edge{rr} & & x\edge{ld}\\
& & x+y\edge{r} & y
\end{tikzccd}
\begin{tikzccd}
& & v' \\
x+y'\edge{r} & y'\edge{ru}\edge{r}\edge{rd} & w'\edge{r} & w\edge{r}\edge{ld} & x\edge{llu}\\
& & x+w
\end{tikzccd}
Since $w',v'>x'$, we must have that $x+y>x'$ and in particular $x+y=\max\{v',w'\}$.
Furthermore, we also must have $v'>w$, since otherwise $v'$ would have to be matched to $\min(y,x')\leq x'$.
Since $w<y$, we in fact need to have $x'=w$ and in particular $x'<y$.
Thus the top diagram is a chain of $8$ elements, and hence there has to be exactly one equality on the bottom side.
Since we now know that $x+y>x+w$, we must have $x+w=y$.
Furthermore, $v'+x'=v'+w=x'+y$ and hence $v'=y$.
So since we have to match $x+y$, we must have $w'=x+y>v'$.
This leads to the solution $v=x$, $x'=w=3x$, $y=v'=4x$, $w'=5x$, and $y'=6x$, for any positive integer $x$ smaller than $(N-1)/6$.

\hfill

So there are only a constant number of different cases, each of which only has $O(N)$ different solutions.
This implies the theorem statement.
In particular, after also computing the relevant duals, we see that the sets violating the Sidon property are exactly the ones described in the proof of Proposition~\ref{prop:k3}.
\end{proof}

\noindent
\textsc{Javier Cilleruelo}: Department of Mathematics, Universidad Aut\'onoma de Madrid, 28049 Madrid, Spain.

\

\noindent
\textsc{Oriol Serra}: Department of Mathematics, Universitat Polit\`ecnica de Catalunya, 08034 Barcelona, Spain.\\
E-mail: \href{mailto:oriol.serra@upc.edu} {\texttt{oriol.serra@upc.edu}}

\

\noindent
\textsc{Maximilian W\"otzel}: Department of Mathematics, Universitat Polit\`ecnica de Catalunya and Barcelona Graduate School of Mathematics, 08034 Barcelona, Spain.\\
E-mail: \href{mailto:maximilian.wotzel@upc.edu} {\texttt{maximilian.wotzel@upc.edu}}

\blfootnote{Oriol Serra acknowledges support from the Spanish Ministerio de Econom\'ia y Competitividad projects MTM2017-82166-P and MDM-2014-0445. 
Maximilian W\"otzel acknowledges financial support from the Fondo Social Europeo and the Agencia Estatal de Investigaci\'on through the FPI grant number MDM-2014-0445-16-2 and the Spanish Ministry of Economy and Competitiveness, through the Mar\'ia de Maeztu Programme for Units of Excellence in R\&D (MDM-2014-0445), as well as through the project MTM2017-82166-P.}

\end{document}